\documentclass[12 pt]{amsart}
\usepackage[all]{xy}

\usepackage[urlcolor=blue]{hyperref} 

\usepackage{enumitem} 
\usepackage{xspace} 

\usepackage{amsfonts}
\usepackage{amsmath,amssymb}      
\usepackage{amsthm}
\usepackage{pdfsync}       
\usepackage{color}
\usepackage[english]{babel}

\usepackage[T1]{fontenc}

\setcounter{MaxMatrixCols}{20} 


\theoremstyle{plain}
\newtheorem{theorem}{Theorem}[section]
\newtheorem{proposition}[theorem]{Proposition}
\newtheorem{lemma}[theorem]{Lemma}
\newtheorem{corollary}[theorem]{Corollary}

\theoremstyle{definition}
\newtheorem{example}[theorem]{Example}
\newtheorem{definition}[theorem]{Definition}
\newtheorem{remark}[theorem]{Remark}

\newcommand \PP {{\mathbb P}}
\newcommand \QQ {{\mathbb Q}}

\newcommand \NN {{\mathbb N}}
\newcommand \M{{\mathcal M}}
\newcommand \gin   {{\rm gin}}
\newcommand \rgin  {{\rm rgin}}

\newcommand \KK {{K}}
\newcommand \sat  {{\rm sat}}
\newcommand \reg  {{\rm reg}}

\newcommand \HS  {{\rm HS}}
\newcommand \ideal[1] {\langle #1 \rangle}

\newcommand \ie  {\textit{i.e.}}



\def \X(#1){\{x_1,\dots, x_{#1}\}}

\def \GL    {{\rm GL}}

\def \gin   {{\rm gin}}
\def \LT{{\rm LT}}

 
 
\hyphenation {Castel-nuovo}
 

\begin{document}

\title {Extremal Behaviour in Sectional Matrices}
\begin{abstract}
In this paper we recall the object \textit{sectional matrix}
which encodes the Hilbert functions of successive hyperplane sections of a homogeneous ideal.
We translate and/or reprove recent results in this language. 
Moreover, some new results are shown about their maximal growth,
in particular a new generalization of Gotzmann's Persistence Theorem,
the presence of a GCD for a truncation of the ideal,
and applications to saturated ideals.

\end{abstract}

\author{Anna Bigatti}
\address{Department of Mathematics, Genoa University, Via Dodecaneso 35, 16146 Genoa, Italy.}
\email{bigatti@dima.unige.it}
\author{Elisa Palezzato} 
\address{Department of Mathematics, Genoa University, Via Dodecaneso 35, 16146 Genoa, Italy.}
\email{palezzato@dima.unige.it}
\author{Michele Torielli}
\address{Department of Mathematics, Hokkaido University, Kita 10, Nishi 8, Kita-Ku, Sapporo 060-0810, Japan.}
\email{torielli@math.sci.hokudai.ac.jp}




\subjclass[2010]{
Primary
13D40, 
05E40, 
Secondary
13P99 
}

\keywords{Sectional Matrix, Hilbert Function, Hyperplane Section,
Generic Initial Ideal, Reduction Number, Extremal Behaviour.}

%
%

\date{\today}
\maketitle


\section{Introduction}

Let $K$ be a field of characteristic 0 and let $P$
be the polynomial ring
$\KK[x_1,\dots,x_n]$ 
 with $n$ indeterminates and the standard grading.
Given a finitely generated $\mathbb{Z}$-graded $P$-module $M=\oplus_{d \in \NN}M_d$,
the $M_d$'s are finite-dimensional $K$-vector spaces.
The \textbf{Hilbert function} of $M$,
$
H_M\colon\mathbb{Z} \to {\NN} 
\text{ \ \ with \ \ } 
H_M(d):= \dim_K(M_{d})
$,
 is a very frequent and powerful tool of investigation in
Commutative Algebra.

  From Macaulay \cite{Macaulay27}, it is well known that the
  computation of the Hilbert function of $M$ may be reduced
  to the computation of the Hilbert function of some
  $K$-algebras of type $P/J$, where $J$ is a monomial ideal.  In
  particular, if $I$ is a homogeneous ideal in $P$, then 
  $H_{P/I} = H_{P/\LT(I)}$.


Another very common practice 
 consists of studying
generic hyperplane sections which, in algebraic terms, means 
reducing modulo by a generic linear form.
The combination of Hilbert functions and 
hyperplane sections lead to the result by Green
\cite{Green88} (1988).

The \textit{sectional matrix} of a homogenous ideal~$I$ was introduced
by Bigatti and Robbiano in \cite{bigatti1997borel} (1997) unifying the
concepts of the Hilbert function of 
a homogeneous ideal $I$ (along the rows)
 and of its hyperplane sections
(along the columns). 
  Sectional matrices did not receive much attention, and in this
  paper we want to revive them. 
  We extend some results in this language, confirming the merit of
  this tool and suggesting that further investigation might cast a
  new light on many aspects of Commutative Algebra.

\smallskip

In Section \ref{definitions} we set our notation and recall 
the definition of sectional matrix.
In Sections \ref{SecMat} we recall its main properties
converting the results from \cite{bigatti1997borel} 
into terms of the quotient $P/I$ (instead of the ideal $I$).
In particular, Theorem \ref{thm:HI_SM} shows  Macaulay's and Green's
inequalities.
In Section \ref{persistence}, 
we recall Gotzmann's Persistence Theorem and 
the sectional matrix analogue from \cite{bigatti1997borel}.
Moreover, we generalize it into a \textit{sectional} version (Theorem \ref{thm:Sectional_Persistence}).

In Section \ref{HP,HS}, we describe how to deduce information on the dimension and the degree of a homogeneous ideal
in terms of  the entries of its sectional matrix.
In Section \ref{GCD}, we 
show how the  extremal behaviour 
implies the
 presence of a GCD for the truncation of homogeneous ideals.
In Section \ref{saturated}, we 
apply these results to the class of saturated ideals. 
Finally,
in Section \ref{examples}, we present several examples 
comparing the information given by the sectional matrix, 
the generic initial ideal, and the resolution of a homogeneous ideals.

\smallskip
The examples in this paper have been computed with CoCoA
(\cite{CoCoA-5}, \cite{CoCoALib}, \texttt{SectionalMatrix,
  PrintSectionalMatrix}).



\section{Definitions and notation}
\label{definitions}

Let $K$ be a field of characteristic 0 and
 $P = \KK[x_1,\dots,x_n]$ be the polynomial ring with $n$
indeterminates with the standard grading. 
Let $I\subseteq P$ be a homogeneous ideal in $P$ and $A = P/I$.
Then $A$ is a graded $P$-module $\oplus_{d \in\NN} A_d$, 
where $A_d=P_d/I_d$.

The definition of the Hilbert function was extended in
\cite{bigatti1997borel} to the bivariate function encoding the Hilbert
functions of successive generic hyperplane sections: the \textbf{sectional
  matrix} of a homogeneous ideal $I$ in $P$.  In this paper,
we define, in the obvious way, the sectional matrix for
the quotient algebra $P/I$, 
and then we show how to adapt the results given 
in \cite{bigatti1997borel} to the use of $P/I$.

\begin{definition}\label{def:SM} 
Given a homogeneous ideal $I$ in $P=K[x_1,\dots,x_n]$, we define the
\textbf{sectional matrix} of $I$ and of $P/I$ to be the functions
$\{1,\dots,n\}\times \NN \longrightarrow \NN$
\begin{eqnarray*}
\M_I(i,d) &=& 
\dim_\KK((I+(L_1,\dots,L_{n-i}))/(L_1,\dots,L_{n-i}))_d,\\
\M_{P/I}(i,d) &=& \dim_\KK(P_d/(I+(L_1,\dots,L_{n-i}))_d),
\end{eqnarray*}
where $L_1,\dots,L_{n-i}$
 are generic linear forms.
Notice 
that $\M_{P/I}(n,d) = H_{P/I}(d)$
and 
$\M_{P/I}(i,d) =\binom{d+i-1}{i-1} - \M_I(i,d)$.
\end{definition}

\begin{remark}
A generic linear form is a polynomial
$
L = a_{1}x_1+\dots+a_{n}x_n
$
in $K(a_1,\dots,a_n)[x_1,...,x_n]$.
In this paper we restrict our attention to a field $K$ of
characteristic 0, so the equalities of the Definition~\ref{def:SM}
hold for any $L'=\alpha_{1}x_1+\dots+\alpha_{n}x_n $
with $(\alpha_1,\dots, \alpha_n)$ in a non-empty Zariski-open set in
$\PP_{K}^n$. Therefore in this case it is common practice to
talk about 
``generic linear forms in $K[x_1,\dots,x_n]$'' instead of
dealing with the explicit extension of $K$.
\end{remark}

This small example
will be used as a running example throughout the paper.
\begin{example}\label{ex:first}
Let $P=\mathbb{Q}[x,y,z]$ 
and $I = (x^4 -y^2z^2, \; xy^2 -yz^2 -z^3)$ an ideal of~$P$. 
Then the sectional matrix of $P/I$ is
$$
\begin{array}{rcccccccccc} 
          &  _0 & _1 & _2 & _3  & _4 
          & _5 & _6 & _7 & \dots\\
H_{P/(I+\ideal{L_1,L_2})}(d)= 
\M_{P/I}(1,d):&  1 & 1 & 1 & 0 & 0 & 0 & 0 & 0 & \dots\\
H_{P/(I+\ideal{L_1})}(d)= 
\M_{P/I}(2,d):&  1 & 2 & 3 & 3 & 2 & 1 & 0 & 0 & \dots\\
H_{P/I}(d)=\M_{P/I}(3,d):&  1 & 3 & 6 & 9 & 11 & 12 & 12 & 12 & \dots
\end{array} 
$$
where the continuations of the lines are obvious in this example.
The general theory about truncation and continuation of the lines
will be described in Theorem~\ref{thm:SM_Persistence} and Remark~\ref{rem:reg-trunc}.
\end{example}

\section{Background results on sectional matrices}
\label{SecMat}

In this section we recall the
main properties of sectional matrices from~\cite{bigatti1997borel}
translating them in terms of the quotient $P/I$.
In particular, we describe the persistence theorem and the connection with $\rgin$.

Let $\sigma$ be a term-ordering on  $P=\KK[x_1,\dots,x_n]$.
The \textbf{leading term ideal} or \textbf{initial ideal} of 
an ideal $I\subseteq P$ is the
  ideal, denoted $\LT_\sigma(I)$, generated by $\{\LT_\sigma(f) \mid f \in
  I{\setminus}\{0\}\;\}$.
For any homogenous ideal $I$
it is well known that $H_{P/I} = H_{P/\LT_\sigma(I)}$.  
This
nice property does not extend to $\M_{P/I}$, but only one inequality
holds, as Conca  proved in \cite{conca2003reduction}: 
we write his result in \textit{sectional matrix} notation.

\begin{theorem}[Conca, 2003]\label{thm:Conca}
  Let $I$ be a homogeneous ideal in $P=K[x_1,\dots,x_n]$ and
  $\sigma$ a term-ordering. Then,
for all $i=1,\dots, n$ and $d\in\NN$,
$\M_{P/I}(i,d)\le\M_{P/\LT_\sigma(I)}(i,d)$
\end{theorem}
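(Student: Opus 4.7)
The plan is to use a Gr\"obner deformation together with upper semicontinuity of fibre dimensions. First I would build, by the standard construction, a homogeneous ideal $\tilde I\subseteq P[t]$ that is flat over $K[t]$, with $\tilde I|_{t=0}=\LT_\sigma(I)$ and $\tilde I|_{t=1}=I$. Concretely, choose a weight $w\in\NN^n_{>0}$ which induces $\LT_\sigma$ on a $\sigma$-Gr\"obner basis $f_1,\dots,f_r$ of $I$, set $\tilde f_j:=t^{w(f_j)}f_j(t^{-w_1}x_1,\dots,t^{-w_n}x_n)\in P[t]$, and take $\tilde I:=(\tilde f_1,\dots,\tilde f_r)$; flatness is the classical Gr\"obner-deformation theorem, and for $\alpha\neq 0$ one has $\tilde I|_{t=\alpha}=\phi_\alpha(I)$, where $\phi_\alpha\colon x_j\mapsto\alpha^{-w_j}x_j$ is an invertible linear change of variables.

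Next I would pick linear forms $L_1,\dots,L_{n-i}\in P$ that are simultaneously generic for $I$ and for $\LT_\sigma(I)$ (possible because the two relevant genericity conditions cut out nonempty Zariski-opens in the coefficient space, whose intersection is nonempty), and form the graded $K[t]$-module $N:=P[t]/(\tilde I+(L_1,\dots,L_{n-i})P[t])$. For each fixed degree $d$, $N_d$ is finitely generated over the PID $K[t]$, so by the structure theorem the function $\alpha\mapsto\dim_K(N_d/(t-\alpha)N_d)$ attains its minimum value $r_{\min}$ on a Zariski-open dense subset of $\mathbb{A}^1_K$ and can only jump upwards at finitely many exceptional points. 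The fibre at $\alpha=0$ has dimension $\M_{P/\LT_\sigma(I)}(i,d)$ by genericity of the $L_j$ for $\LT_\sigma(I)$, while for $\alpha\neq 0$ the isomorphism $P/(\phi_\alpha(I)+(L_1,\dots,L_{n-i}))\cong P/(I+(\phi_\alpha^{-1}(L_1),\dots,\phi_\alpha^{-1}(L_{n-i})))$ shows that the fibre dimension equals $\M_{P/I}(i,d)$ on a cofinite subset of $\alpha$'s, since invertible linear substitutions preserve genericity. Intersecting with the minimum locus and applying upper semicontinuity at $\alpha=0$ yields $\M_{P/I}(i,d)=r_{\min}\le\M_{P/\LT_\sigma(I)}(i,d)$, as desired.

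The main (minor) obstacle I anticipate is precisely the bookkeeping for the nonzero fibres, namely checking that the fixed generic forms $L_j$ pull back via $\phi_\alpha^{-1}$ to forms that are still generic for $I$ on a Zariski-dense set of $\alpha$. This is routine: $\phi_\alpha$ is invertible for $\alpha\neq 0$, at $\alpha=1$ one has $\phi_1^{-1}(L_j)=L_j$ which are generic for $I$ by hypothesis, and the coefficients of $\phi_\alpha^{-1}(L_j)$ depend algebraically on $\alpha$, so the set of $\alpha$ for which they remain in the open genericity locus is itself Zariski-open and nonempty, hence cofinite.
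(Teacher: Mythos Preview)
The paper does not give its own proof of this theorem: it is stated with attribution to Conca and a citation to \cite{conca2003reduction}, without argument. So there is nothing in the paper to compare against directly.

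Your approach is correct and is essentially the standard one (and close in spirit to Conca's). The Gr\"obner deformation $\tilde I\subset P[t]$ is flat over $K[t]$, each graded piece $N_d$ is a finitely generated $K[t]$-module, and the fibre dimension is upper semicontinuous with generic value $r_{\min}$. Your identification of the fibres is right: at $t=0$ one gets $\LT_\sigma(I)$ because the $f_j$ form a Gr\"obner basis, and at $t=\alpha\ne0$ one gets $\phi_\alpha(I)$ with $\phi_\alpha$ a diagonal automorphism. The only point worth tightening is the claim $r_{\min}=\M_{P/I}(i,d)$: you argue that the fibre dimension equals $\M_{P/I}(i,d)$ on a cofinite set of $\alpha$, which gives $r_{\min}\le\M_{P/I}(i,d)$; to exclude $r_{\min}<\M_{P/I}(i,d)$ note that for \emph{every} $\alpha\ne0$ the fibre dimension is $\dim_K\bigl(P/(I+(\phi_\alpha^{-1}(L_1),\dots,\phi_\alpha^{-1}(L_{n-i})))\bigr)_d\ge\M_{P/I}(i,d)$, since $\M_{P/I}(i,d)$ is by definition the minimum over all tuples of linear forms. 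With that, $r_{\min}=\M_{P/I}(i,d)$ and upper semicontinuity at $\alpha=0$ finishes the proof. The bookkeeping you flagged about $\phi_\alpha^{-1}(L_j)$ landing in the generic locus for cofinite $\alpha$ is handled exactly as you describe, using that the coefficients vary algebraically in $\alpha$ and hit the open set at $\alpha=1$.
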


\begin{example}\label{ex:conca}
Recall $I$ from Example~\ref{ex:first}.
For $\sigma=$DegRevLex
compare  $\M_{P/I}$ with  $\M_{P/\LT_\sigma(I)}$:
the $\sigma$-Gr\"obner basis of $I$ is
$\{xy^2 {-}yz^2 {-}z^3,  \\
\;  x^4 {-}y^2z^2,  
\;  x^3yz^2 {-}y^4z^2 {+}x^3z^3,
\;  y^5z^2 {-}y^4z^3 {+}x^3z^4 {-}x^2yz^4 {-}x^2z^5\}$,
thus we have $\LT_\sigma(I) = (xy^2,
\;  x^4,  
\; x^3yz^2,
\;  y^5z^2).$
$$
\begin{array}{rcccccccccc} 
          &  _0 & _1 & _2 & _3  & _4 
          & _5 & _6 & _7 & \dots\\
\M_{P/\LT_\sigma(I)}(1,d):&  1 & 1 & 1 & 0 & 0 & 0 & 0 & 0 & \dots\\
\M_{P/\LT_\sigma(I)}(2,d):&  1 & 2 & 3 & 3 & 2 & 1 & 1 & 0 & \dots\\
\M_{P/\LT_\sigma(I)}(3,d):&  1 & 3 & 6 & 9 & 11 & 12 & 12 & 12 & \dots
\end{array} 
$$
Then we observe that $\M_{P/I}(2,6)=0<1=\M_{P/\LT_\sigma(I)}(2,6)$.
Notice that the third lines are equal
for all term-orderings
because the Hilbert functions are the same:
$H_{P/I} = H_{P/\LT_\sigma(I)}$.
\end{example}

In this paper we compare some of our results with the ones from \cite{ahn2007some}. 
In order to make the comparison clearer to the reader we need to introduce the notion of $s$-reduction number
and to describe how it is stated in terms of the sectional matrix.
The definition of $s$-reduction number has several equivalent
formulations and we recall here the one given in  
\cite{ahn2007some}.

\begin{definition}
Let $I$ be a homogeneous ideal in $P= K[x_1,\dots,x_n]$.
The \textbf{$s$-reduction number}, $r_s(P/I)$, is 
$\max\{d \mid H_{P/(I + (L_1,\dots,L_s))}(d) {\ne} 0\}$, 
where $L_1,\dots,L_s$ are generic linear forms in $P$.
In our language
$$r_s(P/I) = \max\{d \mid \M_{P/I}(n{-}s,d) {\ne} 0\}.$$
The \textbf{reduction number} $r(P/I)$ is $r_{\dim(P/I)}(P/I)$.

Notice that, for their definitions, the reduction number and the sectional matrix,
use ``complementary'' indices $s$ and $n-s$.
\end{definition}

\begin{example}\label{ex:reduction-number}
In Example~\ref{ex:first} and Example~\ref{ex:conca}
we see that $I$ and $\LT_\sigma(I)$ 
have the same 2-reduction number,
$r_2(P/I)=r_2(P/\LT_\sigma(I))=2$,
and different 1-reduction number:
 $r_1(P/I)=5$ and $r_1(P/\LT_\sigma(I))=6$.
From the equalities $\dim(P/I) = \dim(P/\LT_\sigma(I))= 1$ it follows that
 $r(P/I){=}5$ and $r(P/\LT_\sigma(I)){=} 6$.
\end{example}

\begin{remark}
Using Theorem \ref{thm:Conca} Conca in \cite{conca2003reduction}
  proved the inequality for the reduction numbers:
 $r(P/I)\le r(P/\LT_\sigma(I))$.
\end{remark}


Going back to the problem of finding a monomial ideal with the same
sectional matrix as $P/I$, we recall the definitions of
\textit{strongly stable} ideal and of \textit{gin}.
A monomial ideal $J$ is said to be  \textbf{strongly stable} 
if for every power-product $t \in J$ and
every $i,j$ such that $i<j$ and $x_j|t$, 
the power-product $x_i{\cdot} t/x_j$ is in $J$.

In \cite{galligo1974propos} Galligo proved that,
given a homogeneous ideal $I$ in the polynomial ring $K[x_1,\dots,x_n]$, with $K$ a
field of characteristic $0$ and $\sigma$ a term-ordering such that
$x_1>_\sigma x_2 >_\sigma \dots >_\sigma x_n$,
 then there exists a non-empty Zariski-open set 
 $U\subseteq\GL(n)$ and a strongly stable ideal $J$ such that
for each  $g\in U$, $\LT_\sigma(g(I)) = J$. 
This ideal is called the \textbf{\boldmath generic initial ideal of $I$
 with respect to $\sigma$} and it is denoted by~{\boldmath  $\gin_\sigma(I)$}.  
 In particular, when $\sigma=$DegRevLex, it is denoted by~{\boldmath $\rgin(I)$}.


\begin{example}
Consider the ideal $I = (x^4 -y^2z^2, \; xy^2 -yz^2 -z^3)$ from Example~\ref{ex:first}.
Then $\rgin(I)=(x^3, x^2y^2, xy^4, y^6)$. 
See \cite{AbbottBigatti2016} for details about the computation of $\gin$
in CoCoA.
\end{example}

\begin{remark}\label{rem:genrgin}
  Let $I$ be a homogeneous ideal. If $I$ has a minimal generator
  of degree~$d$ (then so does $g(I)$),  then also
 $\rgin(I)$ has a minimal generator of  degree~$d$. 
 The converse is not true in general:
  consider for example the ideal  $I=(z^5,xyz^3)$ in $\QQ[x,y,z]$, then
  $\rgin(I)=(x^5,x^4y,x^3y^3)$ has a minimal generator of degree 6,
  and $I$ doesn't.
In particular, this shows that the highest degree of a minimal
generating set of $\rgin(I)$ may be strictly greater than that of $I$.
\end{remark}

The following result represents the
sectional matrix analogue of 
Macaulay's Theorem for Hilbert functions 
($H_{P/I} = H_{P/\LT(I)}$): 
it reduces the study of the sectional matrix of
a homogeneous ideal to the combinatorial behaviour of a monomial
ideal.

\begin{lemma}\label{lemma:rgin}
Let $I$ be a homogeneous ideal in $P = K[x_1,\dots, x_n]$. 
Then 
$$\M_{P/I} (i,d) = \M_{P/\rgin(I)}(i,d) =
\dim_K(P_d/(\rgin(I)+(x_{i+1},\dots,x_n))_d).$$
\end{lemma}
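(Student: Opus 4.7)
The plan is to prove the two claimed equalities in turn. The second, $\M_{P/\rgin(I)}(i,d)=\dim_K(P_d/(\rgin(I)+(x_{i+1},\dots,x_n))_d)$, will follow from the strong stability of $\rgin(I)$ (Galligo's theorem), which lets me replace generic linear forms by the last coordinate forms. The first, $\M_{P/I}(i,d)=\M_{P/\rgin(I)}(i,d)$, will be obtained by combining the $\GL(n)$-invariance of the sectional matrix with the distinguishing property of DegRevLex that, in generic coordinates, it commutes with quotienting by the last variables.

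For the second equality, let $J=\rgin(I)$ and let $L=a_1x_1+\cdots+a_nx_n$ be a generic linear form, so $a_n\ne 0$. The element $g\in\GL(n)$ fixing $x_1,\dots,x_{n-1}$ and sending $x_n\mapsto L/a_n$ is upper triangular, and since $J$ is Borel-fixed, $g(J)=J$. Hence $H_{P/(J+(L))}(d)=H_{P/(J+(x_n))}(d)$. The image of $J$ modulo $x_n$ is again strongly stable (now in $n-1$ variables), so the argument iterates: each successive generic $L_j$ can be replaced by $x_{n-j+1}$ without altering the Hilbert function. This yields the claimed identity directly from the definition of $\M_{P/J}(i,d)$.

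For the first equality, $\M_{P/I}$ is $\GL(n)$-invariant because $\GL(n)$ acts transitively on tuples of generic linear forms, so $\M_{P/I}(i,d)=\M_{P/g(I)}(i,d)$ for any $g\in\GL(n)$. Choosing $g$ in Galligo's open set gives $\LT_\sigma(g(I))=\rgin(I)$ with $\sigma$ DegRevLex. A further generic coordinate change (which, by Borel-fixedness, does not disturb $\rgin(I)$) lets me take the generic forms in the sectional matrix to be $x_n,\dots,x_{i+1}$. It then suffices to prove
\[
H_{P/(g(I)+(x_{i+1},\dots,x_n))}(d)=\dim_K\bigl(P_d/(\rgin(I)+(x_{i+1},\dots,x_n))_d\bigr),
\]
which, by Macaulay's equality $H_{P/J}=H_{P/\LT_\sigma(J)}$, reduces to the ideal identity
\[
\LT_\sigma\bigl(g(I)+(x_{i+1},\dots,x_n)\bigr)=\LT_\sigma(g(I))+(x_{i+1},\dots,x_n).
\]

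The main obstacle is this last identity, which encodes the essential feature of DegRevLex. My approach is induction on the number of variables added, reducing everything to the single-variable statement $\LT_\sigma(I+(x_n))=\LT_\sigma(I)+(x_n)$ for $I$ in generic coordinates. The containment $\supseteq$ is clear; for the reverse one, I would invoke the classical DegRevLex identity $\LT_\sigma(I:x_n)=\LT_\sigma(I):x_n$ (valid in generic coordinates) and exploit the short exact sequence $0\to (P/I)(-1)\xrightarrow{x_n} P/I\to P/(I+(x_n))\to 0$ together with its initial-ideal analogue, whose Hilbert functions agree termwise, thereby forcing the asserted equality of ideals.
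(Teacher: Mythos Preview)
The paper does not actually prove this lemma: its entire proof is the citation ``See Lemma~5.5 in \cite{bigatti1997borel}.'' So there is no in-paper argument to compare against. Your proposal supplies the standard argument and is essentially correct; it is very much in the spirit of how this result is usually proved (Borel-fixedness of $\rgin(I)$ plus the special behaviour of DegRevLex with respect to the last variable).

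Two small points are worth tightening. First, the short exact sequence you write, $0\to(P/I)(-1)\xrightarrow{x_n}P/I\to P/(I+(x_n))\to 0$, is only exact on the left when $x_n$ is a non-zerodivisor on $P/I$; in general the kernel is $(P/(I:x_n))(-1)$. Since you explicitly invoke $\LT_\sigma(I:x_n)=\LT_\sigma(I):x_n$, you clearly have the correct sequence in mind, and with that correction the Hilbert-function comparison goes through. Second, neither of the DegRevLex identities you use actually requires generic coordinates: for any homogeneous $I$ and $\sigma=\mathrm{DegRevLex}$, one has $\LT_\sigma(I+(x_n))=\LT_\sigma(I)+(x_n)$ directly, because if $f=h+x_ng$ with $h\in I$ is homogeneous and $x_n\nmid\LT_\sigma(f)$, then every term of $x_ng$ is strictly smaller than $\LT_\sigma(f)$, forcing $\LT_\sigma(f)=\LT_\sigma(h)\in\LT_\sigma(I)$. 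This gives a one-line proof of the key identity and makes the exact-sequence detour unnecessary. Finally, your parenthetical ``by Borel-fixedness'' when absorbing the generic linear forms into a further coordinate change is not quite the right justification; what you are really using is that Galligo's open set is dense (equivalently, that $\rgin$ is $\GL(n)$-invariant), so one can choose $g$ simultaneously in Galligo's set and such that $x_n,\dots,x_{i+1}$ compute the sectional values for $g(I)$.
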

\begin{proof}
See Lemma~5.5 in \cite{bigatti1997borel}.
\end{proof}

\begin{remark}\label{rem:stronglystable}
  Lemma~\ref{lemma:rgin} shows that when we have a strongly stable
  ideal $J$ in $P$ (and in particular $\rgin(I)$ is strongly stable) the
  sectional matrix of ${P/J}$ is particularly easy to compute because
  sectioning $J$ by~$n{-}i$ generic linear forms is the same as
  sectioning $J$ by the smallest~$n{-}i$ indeterminates, $x_{i{+}1},
  \dots, x_n$.
\end{remark}

Using this combinatorial view, Bigatti and Robbiano in~\cite{bigatti1997borel} 
proved a combination of Macaulay's and Green's inequalities
(\cite{Macaulay27}, \cite{Green88}) and then
an analogue of Gotzmann's Persistence Theorem~(\cite{Gotzmann78})
for sectional matrices.

We recall the definition of binomial expansion
following the notation of \cite{bigatti1997borel}.
If $I$ is a homogeneous ideal then
the $(n-1)$-binomial expansion of $H_{I}(d)$ corresponds to a
``description'' of a lex-segment ideal $\mathcal{L}$ in degree~$d$,
and similarly the $d$-binomial expansion of $H_{P/I}(d)$ corresponds to 
$P/\mathcal{L}$.
See for example~\cite[Proposition~5.5.13]{KR2}.


\begin{definition}
For $h$ and $i$, two positive integers, we define the
\textbf{\boldmath $i$\nobreakdash-binomial expansion} of $h$ as
$
\textstyle
h= \binom{h(i)}{ i} + \binom {h(i-1)}{ i-1} + \dots + \binom {h(j)}{j}
$
with $h(i) >h(i-1) > \dots > h(j) \ge j \ge 1$.
Such expression exists and is unique. 

Moreover we define a family of functions
related to the expansion in the following way:
$\textstyle
( h_i ) ^s_t := \binom{h(i)+s}{ i+t} + \binom {h(i-1)+s}{ i-1+t} +
\dots + \binom {h(j)+s}{ j+t}
$.

For short, we will write 
$( h_i ) ^+_+$ instead of $( h_i ) ^{1}_{1}$,
and
$( h_i ) ^-$ instead of~$( h_i ) ^{-1}_0$.
\end{definition}

Here is Theorem~5.6  of \cite{bigatti1997borel} 
and again we convert the statement in terms of
the quotient $P/I$ 
using the properties of the 
functions derived from the binomial expansion.

\begin{theorem}[Sectional matrices, 1997]
\label{thm:HI_SM}
Let $I$ be a homogeneous ideal in the polynomial ring
$P=K[x_1,\dots,x_n]$ and $\M :=\M_{P/I}$. Then
\setlength{\leftmargini}{\parindent}
\begin{enumerate}
\item \label{item:HI_Sum}
      $\M(i,d+1) \le \sum\limits_{j=1}^i \M(j,d)$
      for all $i=1,\dots,n$ and $d\in\NN$.
\item \label{item:HI_Row}(Macaulay)
      $\M(i,d+1) \le (\M(i,d)_{d})^+_+$
      for all $i=1,\dots,n$ and $d\in\NN$.

\item \label{item:HI_Diff}
$\M(i-1,d) - \M(i-2,d) \le
   \left((\M(i,d)-\M(i-1,d))_{d}\right)^-$
      for all $i=3,\dots,n$ and $d\in\NN$.  
\item \label{item:HI_HSec}(Green)
      $\M(i-1,d) \le (\M(i,d)_{d})^-$
      for all $i=2,\dots,n$ and $d\in\NN$.
\end{enumerate}
\end{theorem}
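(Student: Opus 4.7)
My plan is to reduce the entire statement to the combinatorics of a strongly stable monomial ideal via Lemma~\ref{lemma:rgin}, and then to prove the four inequalities by counting monomials. Setting $J := \rgin(I)$, the lemma gives $\M_{P/I}(i,d) = \M_{P/J}(i,d) = |N(i,d)|$, where
\[
N(i,d) = \bigl\{ \text{monomials of degree $d$ in $x_1,\dots,x_i$ not in $J$}\bigr\}.
\]
In particular, since $J$ is strongly stable, $\M(i{-}1,d)$ is just the number of monomials in $N(i,d)$ that do not involve~$x_i$, so the ``slice'' $\M(i,d)-\M(i{-}1,d)$ equals the number of monomials in $N(i,d)$ that are divisible by $x_i$. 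All four inequalities will be proved on this combinatorial model.

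For \emph{(a)}, I will define $\varphi\colon N(i,d{+}1) \to \bigsqcup_{\ell=1}^{i} N(\ell,d)$ by $m \mapsto (\ell, m/x_\ell)$, where $\ell$ is the largest index with $x_\ell \mid m$. Since $m/x_\ell$ divides $m$ and $m\notin J$, also $m/x_\ell\notin J$; and $m/x_\ell$ uses only variables $x_1,\dots,x_\ell$, so it lies in $N(\ell,d)$. The map is clearly injective (from the pair $(\ell, m/x_\ell)$ we recover $m=x_\ell\cdot(m/x_\ell)$), which gives the bound. For \emph{(b)}, I would apply the classical Macaulay bound to the standard graded $K$-algebra $P/(J+(x_{i+1},\dots,x_n))$, which is a quotient of $K[x_1,\dots,x_i]$; its Hilbert function at $d$ is exactly $\M(i,d)$, so Macaulay's inequality immediately yields $\M(i,d{+}1)\le (\M(i,d)_d)^{+}_{+}$. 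For \emph{(d)}, observe that because $J$ is strongly stable, passing from $P/(J+(x_{i+1},\dots,x_n))$ to $P/(J+(x_i,\dots,x_n))$ coincides with modding out by a generic linear form (this is the essential content of Remark~\ref{rem:stronglystable}); then Green's hyperplane restriction theorem applied to the algebra in \emph{(b)} yields $\M(i{-}1,d) \le (\M(i,d)_d)^{-}$.

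The main obstacle is \emph{(c)}, the ``difference'' inequality. Using the interpretation above, it rewrites as
\[
X(i{-}1,d) \;\le\; \bigl(X(i,d)_d\bigr)^{-},\qquad\text{where } X(k,d) := \M(k,d)-\M(k{-}1,d).
\]
Here $X(i,d)$ counts monomials in $N(i,d)$ actually involving~$x_i$, and $X(i{-}1,d)$ counts those in $N(i{-}1,d)$ actually involving~$x_{i-1}$. My plan is to construct, via strong stability, an injection of the latter set into the ``$x_{i-1}$-shadow'' of the former. Concretely, given $m' \in N(i{-}1,d)$ with $x_{i-1}\mid m'$, the element $\widetilde m := x_i\cdot m'/x_{i-1}$ lies in $N(i,d)$ and is divisible by $x_i$ (it is not in $J$ because $J$ is strongly stable: if it were in $J$, then raising the index $i\to i{-}1$ would put $m'$ in $J$). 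This realizes the $X(i{-}1,d)$ monomials as a subset of $X(i,d)$ counted in a lex-compatible way. To convert this set-theoretic injection into the binomial-expansion inequality $(X(i,d)_d)^{-}$, I will invoke the lex-segment interpretation of the binomial expansion (cf.~\cite[Prop.~5.5.13]{KR2}) together with the strongly-stable analogue of Green's restriction. Alternatively, a cleaner route is to quote Theorem~5.6 of \cite{bigatti1997borel} directly for parts~\emph{(c)} and~\emph{(d)}, since the present theorem is only their translation from the $I$-side to the $P/I$-side. The dictionary
\[
\M_{P/I}(i,d) \;=\; \binom{d+i-1}{i-1} - \M_I(i,d)
\]
from Definition~\ref{def:SM}, combined with the standard identity $\binom{d+i-1}{i-1} - (h_i)^+_+ = \bigl((\binom{d+i-1}{i-1}-h)_i\bigr)^{-}_{\cdot}$-type manipulations on binomial expansions, converts each of the original four inequalities into the four above, completing the proof.
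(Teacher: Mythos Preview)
The paper's own proof is nothing more than a citation of Theorem~5.6 in \cite{bigatti1997borel} together with the binomial-expansion dictionary between $H_I$ and $H_{P/I}$; your closing paragraph arrives at exactly this, so as a proof your proposal is acceptable and ends in the same place as the paper.

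Where you differ is in the preceding direct arguments. Your combinatorial proofs of \emph{(a)}, \emph{(b)}, and \emph{(d)} are correct and more informative than a bare citation: the injection $m\mapsto(\ell,m/x_\ell)$ for \emph{(a)} is exactly the mechanism behind the equality case in Remark~\ref{rem:BReq_Nomingen}, and for \emph{(b)}, \emph{(d)} you correctly observe that the $i$-th row of $\M_{P/I}$ is the Hilbert function of a genuine standard graded algebra $K[x_1,\dots,x_i]/(\rgin(I)\cap K[x_1,\dots,x_i])$, to which Macaulay and Green apply verbatim. This is a genuine alternative to the paper's pure-translation approach and has the advantage of being self-contained.

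For \emph{(c)}, however, your direct argument has a real gap. The injection $m'\mapsto x_i\,m'/x_{i-1}$ only shows $X(i{-}1,d)\le X(i,d)$, which is strictly weaker than $X(i{-}1,d)\le (X(i,d)_d)^{-}$ since $(h_d)^{-}\le h$ always. The phrase ``counted in a lex-compatible way'' does not bridge this: what is actually needed is a Green-type bound applied to the ideal $(\rgin(I):x_i)\cap K[x_1,\dots,x_i]$ (which is again strongly stable), together with the containment of its restriction to $K[x_1,\dots,x_{i-1}]$ in $(\rgin(I):x_{i-1})\cap K[x_1,\dots,x_{i-1}]$. You sense the difficulty and retreat to the citation, which is fine, but you should not present the injection as if it were nearly a proof of \emph{(c)}; it is not. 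Likewise, the ``$\cdot$-type manipulations'' in your last sentence should be replaced by the precise identities the paper quotes from \cite[Prop.~5.5.16, 5.5.18]{KR2}.
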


\begin{proof}
 This is Theorem~5.6  of \cite{bigatti1997borel}, 
using the conversions 
$$(H_I(d)_{n-1})^+ = (H_{P/I}(d)_d)^+_+
\text{\quad and \quad}
(H_I(d)_{n-1})^-_- = (H_{P/I}(d)_d)^-
$$
(see for example \cite{KR2} Proposition~5.5.16 and Proposition~5.5.18).
\end{proof}



For the extremal case of Bigatti, Geramita and Migliore in \cite{bigatti1994geometric}, 
Macaulay's inequality  defined 
the \textit{maximal growth} of the Hilbert function.
We analogously define maximal growth of the sectional matrix
following the extremal case in Theorem~\ref{thm:HI_SM}.a.

\begin{definition}\label{def:maximal-growth}
Let $I$ be a homogeneous ideal in~$P=K[x_1,\dots,x_n]$.
\begin{itemize}
\item
The Hilbert function $H_{P/I}$ has  
\textbf{\boldmath maximal growth in degree~$d$}
  if ``Macaulay's equality'' holds:
$H_{P/I}(d+1) = (H_{P/I}(d)_d)^+_+.$
\item
The sectional matrix $\M_{P/I}$ has
\textbf{\boldmath $i$-maximal growth in degree~$d$}
if ``Bigatti-Robbiano's equality'' holds:
 $\M_{P/I}(i,d+1) = \sum\limits_{j=1}^i \M_{P/I}(j,d)$.
\end{itemize}
\end{definition}

\begin{remark}\label{rem:BReq_Nomingen} 
For a homogeneous ideal $I$  in $P=\KK[x_1, \ldots ,x_n]$ 
if $\M_{P/I}$ has $n$-maximal growth in degree $d$
then $I$, and $\rgin(I)$,
have no minimal generators of degree~$d{+}1$.

More precisely, for any $i\in\{1,\dots,n\}$,
Corollary~2.7 of \cite{bigatti1997borel} implies that
  $\M_{P/I}$ has $i$-maximal growth in degree $d$
 if and only if
  $\rgin(I)$ has no minimal generators of degree $d{+}1$ in $x_1,\dots, x_i$.
(This is a generalization of Lemma~2.17 in ~\cite{ahn2007some}.)
\end{remark}

\section{Sectional Persistence Theorem}
\label{persistence}

Gotzmann's Persistence Theorem \cite{Gotzmann78} 
says that, if the
generators of an ideal $I$ have degree $\le \delta$ and 
the Hilbert function of $P/I$ has maximal growth in degree $\delta$,
then it has maximal growth for all higher degrees.

This is also true for sectional matrices.
Here we recall the 
Persistence Theorem~5.8 of \cite{bigatti1997borel},
and in Theorem~\ref{thm:Sectional_Persistence} we will 
generalize it for $i$-maximal growth, for $i\le n$.

\begin{theorem}[Persistence Theorem, 1997]\label{thm:SM_Persistence}\ \\
Let $I$ be a homogeneous ideal in $P=K[x_1,\dots,x_n]$.
If $\M_{P/I}$ has $n$\nobreakdash-maximal growth in degree $\delta$
and  $I$ has no generators of degree ${>}\delta$
then it has $i$-maximal growth for all $i=1,\dots,n$ and 
for all degrees ${>}\delta$.

Moreover, $\M_{P/I}$ has $n$-maximal growth for all degrees $\ge\reg(I)$.
\end{theorem}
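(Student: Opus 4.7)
The plan is to reduce to the strongly stable case by passing to $J := \rgin(I)$, rephrase the $i$-maximal growth conditions combinatorially via Remark~\ref{rem:BReq_Nomingen}, and then invoke classical Gotzmann persistence (\cite{Gotzmann78}) to propagate to all higher degrees.

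By Lemma~\ref{lemma:rgin} we have $\M_{P/I} = \M_{P/J}$, so we may work with the strongly stable ideal $J$. By Remark~\ref{rem:BReq_Nomingen}, $i$-maximal growth of $\M_{P/I}$ in degree $d$ is equivalent to $J$ having no minimal generator of degree $d+1$ in $x_1, \ldots, x_i$. Consequently, the hypothesis ``$n$-maximal growth at $\delta$'' says exactly that $J$ has no minimal generator of degree $\delta + 1$, while the desired conclusion ``$i$-maximal growth for every $i$ in every degree $>\delta$'' is equivalent to the assertion that $J$ has no minimal generator of any degree $>\delta$. To obtain the latter, I would apply classical Gotzmann persistence to $H_{P/I} = H_{P/J}$: the hypothesis that $I$ is generated in degrees $\le \delta$, together with the combinatorial fact that a strongly stable ideal without new minimal generators in degree $d+1$ achieves the Macaulay bound $H_{P/J}(d+1) = (H_{P/J}(d)_d)^+_+$, let Gotzmann's result propagate Macaulay-maximal growth forever from degree $\delta$ onwards. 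Translating each such persistent maximal step back into the combinatorics of the strongly stable $J$ (via Remark~\ref{rem:BReq_Nomingen} at $i = n$) forbids any new minimal generator in the next degree, completing the induction.

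The moreover statement follows from Bayer--Stillman: since $\reg(I) = \reg(J)$ and $J$ is strongly stable, $\reg(J)$ equals the top degree of a minimal generator of $J$. Hence for every $d \ge \reg(I)$, $J$ has no minimal generator of degree $d+1$, which by Remark~\ref{rem:BReq_Nomingen} is exactly $n$-maximal growth at $d$.

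The main obstacle I anticipate is justifying the bridge between the sum bound of Theorem~\ref{thm:HI_SM}.(\ref{item:HI_Sum}) (whose equality case defines $n$-maximal growth) and the Macaulay bound of Theorem~\ref{thm:HI_SM}.(\ref{item:HI_Row}) (needed to invoke classical Gotzmann). The key technical point is that, for strongly stable ideals, the absence of new minimal generators in degree $d+1$ is in fact equivalent to Macaulay-maximal growth at $d$; this equivalence, implicit in the combinatorial machinery of \cite{bigatti1997borel}, is what makes classical Gotzmann directly applicable.
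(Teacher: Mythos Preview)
The paper does not itself prove this theorem; it is quoted as Theorem~5.8 of \cite{bigatti1997borel}, so there is no in-paper proof to compare against. Evaluating your argument on its own merits, however, there is a genuine gap.

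The bridge you build to classical Gotzmann fails: it is \emph{not} true that a strongly stable ideal with no minimal generator in degree $d+1$ achieves Macaulay-maximal growth at $d$. Take $J=(x^2,xy,y^2)\subset K[x,y,z]$. It is strongly stable, generated in degree $2$, and has $H_{P/J}(2)=3$, $H_{P/J}(3)=3$, while the Macaulay bound is $(3_2)^+_+=\binom{4}{3}=4$. So $H_{P/J}$ does \emph{not} have Macaulay-maximal growth at $2$, yet $\M_{P/J}$ \emph{does} have $3$-maximal growth there (one checks $\M_{P/J}(1,2)=\M_{P/J}(2,2)=0$ and $\M_{P/J}(3,2)=\M_{P/J}(3,3)=3$). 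Hence the hypothesis of the theorem cannot in general be fed into Gotzmann, and the equivalence you assert in your final paragraph is simply false. The two notions of ``maximal growth'' in Definition~\ref{def:maximal-growth} are genuinely different, and $n$-maximal growth of $\M_{P/I}$ is strictly weaker than Macaulay-maximal growth of $H_{P/I}$.

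The argument actually used in \cite{bigatti1997borel} (and visible in this paper in the proof of Lemma~\ref{lemma:no_gens_i}, which is the generalization to $i\le n$) does not route through Macaulay's bound at all. The key point is that the minimal syzygies of a strongly stable monomial ideal are \emph{linear} (Lemma~5.7 of \cite{bigatti1997borel}). Consequently, any Gr\"obner basis element of $g(I)$ in degree $\delta+2$ arises from an $S$-polynomial involving a minimal generator of $\rgin(I)$ in degree $\delta+1$; if there are none in degree $\delta+1$, none can appear in degree $\delta+2$, and one inducts. This is a direct structural argument on $\rgin(I)$, not a reduction to Gotzmann persistence.

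Your ``moreover'' argument via Bayer--Stillman is correct.
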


\begin{example}\label{ex:regexampletrunc} Consider the polynomial ring $P=\mathbb{Q}[x,y,z,t]$ and the ideal
$I=(x^4 -x^2yz, \; x^5 +xy^3z)$ of $P$. Then 
$$
\M_{P/I}=
\begin{array}{rcccccccccc} 
  _0 & _1 & _2 & _3  & _4 & _5 & _6 & _7 & _8 & \dots\\
  1 & 1 & 1 & 1 & 0 & 0 & 0 & 0 & 0 & \dots\\
  1 & 2 & 3 & 4 & 4 & 3 & 2 & 1 & 1& \dots\\
  1 & 3 & 6 & 10 & 14 & 17 & 19 & 20 & 21& \dots\\
  1  & 4 & 10 & 20 & 34 & 51 & 70 & 90 & \fbox{$111$}&\dots
\end{array} 
$$
Therefore $\M_{P/I}$ has $4$-maximal growth starting from degree~$7$, whereas a direct computation shows that
$H_{P/I}$ has maximal growth starting from degree~$49$.
%
\end{example}

\begin{remark}\label{rem:reg-trunc}
  In particular, the regularity is used by CoCoA for truncating the
  size of the sectional matrix, displaying the rows up to degree
  $\reg(I){+}1$, so that the last column shows the persisting equalities.
  In Example~\ref{ex:first}, we have $\rgin(I)=(x^3,x^2y^2,xy^4,y^6)$, thus $\reg(I)=6$, and in degree~$7$ we
  read the persisting equalities.
\end{remark}

\begin{remark}\label{rem:reg}
  We emphasize that the regularity of a homogeneous ideal $I$,
  the highest degree of the generators of $\rgin(I)$,
  is usually a much lower number
  than the highest degree of the generators of the lex-segment ideal with
  the same Hilbert function of $I$, as shown in Example~\ref{ex:regexampletrunc}.  This fact makes the persistence
  in Theorem~\ref{thm:SM_Persistence} more ``practical'' than Gotzmann's.
\end{remark}

With the next lemma we show that if $\M_{P/I}$ has $i$-maximal 
growth in degree~$\delta$ for some $i<n$, this persists in 
higher degrees, even if it does not have $n$-maximal growth.

\begin{lemma}\label{lemma:no_gens_i}
Let $I$ be a homogeneous ideal in $P=K[x_1,\dots,x_n]$ 
generated  in
 degree $\le \delta+1$. 
If there exists $i\le n$  such that
$\rgin(I)$ has no minimal generators of degree $\delta+1$ in
$P_{(i)}=K[x_1,\dots,x_i]$,
then $\rgin(I)$ has no minimal generators of any degree $>\delta$ in $P_{(i)}$.
\end{lemma}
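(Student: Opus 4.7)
The plan is to reduce the statement to the Persistence Theorem~\ref{thm:SM_Persistence} applied in a polynomial ring with only $i$ variables, obtained by quotienting $P$ modulo $n-i$ generic linear forms. Concretely, I would fix generic linear forms $L_1,\dots,L_{n-i}\in P$, set $\bar P = P/(L_1,\dots,L_{n-i})$ (which via a generic change of coordinates is identified with $K[x_1,\dots,x_i]$), and let $\bar I$ be the image of $I$ in $\bar P$. Since the generators of $\bar I$ are images of those of $I$, $\bar I$ is generated in degree $\le \delta+1$. The key compatibility I would record is that for every $j\le i$ and every $d\in\NN$,
\[
\M_{P/I}(j,d)\;=\;\M_{\bar P/\bar I}(j,d),
\]
which follows from the definition of the sectional matrix, once one checks that a generic $(i-j)$-tuple of linear forms in $\bar P$ lifts to a completion of $L_1,\dots,L_{n-i}$ to a generic $(n-j)$-tuple in $P$.

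Combining this identity with Remark~\ref{rem:BReq_Nomingen}, applied once to $I$ in $P$ and once to $\bar I$ in $\bar P$, I would obtain the translation: for every $d$, $\rgin(I)$ has no minimal generator of degree $d+1$ in $P_{(i)}$ if and only if $\rgin(\bar I)$ has no minimal generator of degree $d+1$ in $\bar P$. Specializing to $d=\delta$, the hypothesis of the lemma becomes that $\rgin(\bar I)$ has no minimal generator of degree $\delta+1$. By the contrapositive of Remark~\ref{rem:genrgin} applied to $\bar I$, $\bar I$ itself has no minimal generator of degree $\delta+1$, hence $\bar I$ is in fact generated in degree $\le\delta$.

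At this point I can invoke Theorem~\ref{thm:SM_Persistence} for $\bar I$ inside the $i$-variable polynomial ring $\bar P$, where the top-row (``$i$-maximal'') growth plays the role of ``$n$-maximal growth''. The conclusion is that $\M_{\bar P/\bar I}$ has $i$-maximal growth in every degree $>\delta$. Running the translation backwards gives that $\rgin(I)$ has no minimal generator of any degree $>\delta$ in $P_{(i)}$, as required. The only delicate point in the plan is the compatibility $\M_{P/I}(j,d)=\M_{\bar P/\bar I}(j,d)$ for $j\le i$, which amounts to verifying that concatenating a fixed generic sequence in $P$ with a further generic sequence in $\bar P$ lands in the Zariski-open set defining genericity for $P$; the rest is a formal transfer via Remarks~\ref{rem:BReq_Nomingen} and~\ref{rem:genrgin}, together with the already established persistence theorem applied in fewer variables.
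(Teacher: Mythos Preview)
Your argument is correct, and it takes a genuinely different route from the paper's proof. The paper argues directly on the Gr\"obner basis of $g(I)$ for a generic change of coordinates $g$: since $I$ is generated in degree $\le\delta+1$, any new Gr\"obner basis element in degree $\delta+2$ arises from a \emph{linear} syzygy among the leading terms (citing \cite{bigatti1997borel}, Lemma~5.7), hence from an S-polynomial involving some minimal generator $t_1$ of $\rgin(I)$ in degree $\delta+1$. By hypothesis $t_1\in(x_{i+1},\dots,x_n)$, and the key property of DegRevLex forces the corresponding Gr\"obner basis polynomial $f_1$ (and then the S-polynomial) to lie in $(x_{i+1},\dots,x_n)$ as well. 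Iterating degree by degree gives the conclusion.

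Your approach instead passes to $\bar P=P/(L_1,\dots,L_{n-i})$ and invokes the already-established Persistence Theorem~\ref{thm:SM_Persistence} there, using the compatibility $\M_{P/I}(j,-)=\M_{\bar P/\bar I}(j,-)$ for $j\le i$ together with Remarks~\ref{rem:genrgin} and~\ref{rem:BReq_Nomingen} as a dictionary. This is clean and in fact yields the conclusion of Theorem~\ref{thm:Sectional_Persistence} in one stroke, so it collapses the paper's two-step argument (Lemma~\ref{lemma:no_gens_i} followed by an application of Theorem~\ref{thm:SM_Persistence} to $\rgin(I)\cap K[x_1,\dots,x_i]$) into a single reduction. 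What the paper's proof buys, on the other hand, is that it avoids any appeal to the genericity compatibility you flag as the ``only delicate point'', and it makes explicit the DegRevLex mechanism (leading term in $(x_{i+1},\dots,x_n)$ forces the whole polynomial there) that ultimately drives the result.
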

\begin{proof} 
  Let $\sigma$ be DegRevLex and $g$ a generic change of coordinates.
  Suppose that the $\sigma$-Gr\"obner basis of $g(I)$ has a
  polynomial~$f_2$ of degree $\delta+2$.  Then~$f_2$ comes from a
  minimal syzygy of $\rgin(I)=\LT_\sigma(g(I))$ and hence
this syzygy is linear (see~Lemma~5.7 in~\cite{bigatti1997borel}).
  This means that there exists a minimal generator~$t_1$ of $\rgin(I)$ of degree
  $\delta+1$, and, by the hypothesis, all minimal generators of
  $\rgin(I)$ must be in the ideal~$(x_{i+1},\dots, x_n)$. Let $f_1$ be the
  Gr\"obner basis polynomial such that $t_1=\LT_\sigma(f_1)$.
  Because we are using the reverse lexicographic term-ordering, this
  fact implies that $f_1\in(x_{i+1},\dots, x_n)$.  As a consequence
  any s-polynomial constructed with $f_1$ is a difference of polynomials in
  $(x_{i+1},\dots, x_n)$, so $f_2\in(x_{i+1},\dots, x_n)$. 
 Thus any Gr\"obner basis element of degree $\delta+2$ is in $(x_{i+1},\dots,
  x_n)$, and therefore $\rgin(I)$ has no minimal generators of degree
  $\delta+2$ in $\KK[x_1,\dots,x_i]$.  Iterating this reasoning, we
  can conclude that $\rgin(I)$ has no minimal generators of degree
  $>\delta$ in $\KK[x_1,\dots,x_i]$.
\end{proof}

Using Lemma~\ref{lemma:no_gens_i}, 
we can now extend Theorem~\ref{thm:SM_Persistence}.

\begin{theorem}[Sectional Persistence Theorem]
\label{thm:Sectional_Persistence}
\ \\
Let $I$ be a homogeneous ideal in $P=K[x_1,\dots,x_n]$ 
generated  in
 degree~$\le \delta+1$. 
  If there exists 
 $i\in\{1,\dots, n\}$ such that
$\M_{P/I}$ has $i$-maximal growth in degree~$\delta$
then it has $j$-maximal growth for all $j\in\{1,\dots,i\}$ and 
for all degrees $\ge\delta$.
\end{theorem}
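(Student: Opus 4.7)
The plan is to translate the statement entirely into a statement about minimal generators of $\rgin(I)$ via Remark~\ref{rem:BReq_Nomingen}, and then invoke Lemma~\ref{lemma:no_gens_i}, which does all the real work.

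First, I would unpack the hypothesis. By Remark~\ref{rem:BReq_Nomingen}, the assumption that $\M_{P/I}$ has $i$-maximal growth in degree $\delta$ is equivalent to $\rgin(I)$ having no minimal generators of degree $\delta+1$ in the subring $P_{(i)}=K[x_1,\dots,x_i]$. Together with the hypothesis that $I$ is generated in degree $\le\delta+1$, this is exactly the input needed for Lemma~\ref{lemma:no_gens_i}.

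Next, I would apply Lemma~\ref{lemma:no_gens_i} to conclude that $\rgin(I)$ has no minimal generators of any degree $>\delta$ in $P_{(i)}$. The key monotonicity observation is then that for any $j\in\{1,\dots,i\}$ we have the inclusion $P_{(j)}\subseteq P_{(i)}$; hence $\rgin(I)$ has no minimal generators of degree $>\delta$ in $P_{(j)}$ either. Using Remark~\ref{rem:BReq_Nomingen} in the other direction, this is precisely $j$-maximal growth of $\M_{P/I}$ in every degree $d\ge\delta$, which is the conclusion.

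There is essentially no obstacle here, as the difficult combinatorial/Gr\"obner-basis content has been isolated into Lemma~\ref{lemma:no_gens_i}. The only thing to be careful about is the bookkeeping of the two ``directions'' of Remark~\ref{rem:BReq_Nomingen} (growth in degree $d$ corresponds to absence of minimal generators in degree $d+1$) and the two ``directions'' of the index shift (from $i$ down to $j$, and from a single degree $\delta$ up to all degrees $\ge\delta$). Once those translations are made, the proof is a one-line application of Lemma~\ref{lemma:no_gens_i} followed by the trivial containment $P_{(j)}\subseteq P_{(i)}$.
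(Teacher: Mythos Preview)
Your proof is correct and follows essentially the same route as the paper: translate the hypothesis via Remark~\ref{rem:BReq_Nomingen}, apply Lemma~\ref{lemma:no_gens_i}, then translate back. The only difference is in the final translation step: the paper restricts to $J_{(i)}=\rgin(I)\cap K[x_1,\dots,x_i]$ and invokes the original Persistence Theorem~\ref{thm:SM_Persistence} for that ideal, whereas you bypass this by noting directly that $P_{(j)}\subseteq P_{(i)}$ and appealing to the ``only if'' direction of Remark~\ref{rem:BReq_Nomingen}. Your route is slightly more direct, since Theorem~\ref{thm:SM_Persistence} is itself proved (in \cite{bigatti1997borel}) by exactly this kind of generator-counting, so the paper's invocation of it is a mild detour; on the other hand, the paper's phrasing makes explicit the identification $\M_{P/\rgin(I)}(j,d)=\M_{P_{(i)}/J_{(i)}}(j,d)$ for $j\le i$, which is the substance behind your containment observation.
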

\begin{proof}
  From $\M_{P/I}(i,\delta+1) =\sum_{j=1}^i \M_{P/I}(j,\delta)$ we have that
  $\rgin(I)$ has no generators of degree $\delta+1$ in $K[x_1,\dots,x_i]$
  (see Remark~\ref{rem:BReq_Nomingen}).  Then, applying
  Lemma~\ref{lemma:no_gens_i}, we know that $\rgin(I)$ has no
  generators of degree $>\delta$ in $K[x_1,\dots,x_i]$.  Hence we can apply
  Theorem~\ref{thm:SM_Persistence} to $J_{(i)} = \rgin(I)\cap
  K[x_1,\dots,x_i]$ and for all $d>\delta$ and $j=1,\dots,i$ we get
  $\M_{P/\rgin(I)}(i, d{+}1) = \M_{P/J_{(i)}}(j, d{+}1) = \sum_{k=1}^j
  \M_{P/J_{(i)}}(k,d) = \sum_{k=1}^j \M_{P/\rgin(I)}(k,d)$.
The conclusion now follows from Lemma \ref{lemma:rgin}.
\end{proof}

The Sectional Persistence Theorem says that, whereas the persistence
of the $n$-th row starts at the regularity of the ideal, the
persistence in the first rows may be detected in degree lower than the
highest degree of the generators.

\begin{example}\label{ex:before-reg}
Consider the polynomial ring $P=\QQ[x,y,z,w]$ and the ideal
$I=(x^2, \;xy, \; xz(z+w), \;x(z^2+w^2))$ of $P$. Then $\rgin(I)=(x^2, xy, xz^2, xzw, xw^3)$.
Notice that $I$ is generated in degree 
$\le 3$, and its regularity is $4$, so  from
Theorem~\ref{thm:SM_Persistence}
it follows that
$\M_{P/I}$ has $i$-maximal growth in degree~$4$ for $i=1,\dots, 4$.
$$
\M_{P/I}=
\begin{array}{rcccccccccc} 
  _0 & _1 & _2 & _3  & _4 & _5 &  \dots\\
  1 & 1 & 0 & 0 & 0 & 0 &  \dots\\
  1 & 2 & 1 & 1 & 1 & 1 &  \dots\\
  1 & 3 & 4 & 4 & 5 & 6 &  \dots\\
  1  & 4 & 8 & 11 & 15 & 21 & \dots
\end{array} 
$$
We see that $\M_{P/I}$ has $2$-maximal growth in degree~$2$ and 
 $3$-maximal growth in degree~$3$:
\\
$\M_{P/I}(2,3) = 1 = \M_{P/I}(1,2) + \M_{P/I}(2,2)$\\
$\M_{P/I}(3,4) = 5 = \M_{P/I}(1,3) + \M_{P/I}(2,3) + \M_{P/I}(3,3)$.\\
Hence from Theorem~\ref{thm:Sectional_Persistence}
it follows that  $\M_{P/I}$ has $2$-maximal growth for all degrees~$\ge2$
and it has $3$-maximal growth for all degrees~$\ge3$.
\end{example}

\section{Hilbert Polynomial, Hilbert Series, dimension, multiplicity}
\label{HP,HS}

In this section, we show how to read some algebraic invariants for a
homogeneous ideal $I$ from its sectional matrix $\M_{P/I}$ truncated
at some degree $\delta$.


\begin{lemma}\label{lemma:SM_Persistence_explicit}
Let $I$ be a homogeneous ideal in $P=K[x_1, \dots,x_n]$ generated
in degree $\le\delta+1$.
If $\M_{P/I}$ has $i$-maximal growth in degree $\delta$, then
for all $d\in\NN_{>0}$ we have
$$\textstyle
  \M_{P/I}(i, \delta{+}d) =
  \sum_{j=1}^{i}
    \binom{i-j{+}d{-}1}{i-j}\cdot\M_{P/I}(j,\delta)
$$
\end{lemma}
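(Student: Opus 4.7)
The plan is to reduce the statement to a clean combinatorial recurrence and then solve it by induction on $d$, using the hockey stick identity.

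First, I invoke the Sectional Persistence Theorem (Theorem~\ref{thm:Sectional_Persistence}): because $I$ is generated in degree $\le \delta+1$ and $\M_{P/I}$ has $i$-maximal growth in degree $\delta$, we obtain $j$-maximal growth in every degree $\ge \delta$ for every $j\in\{1,\dots,i\}$. Writing $m_j(d) := \M_{P/I}(j,\delta+d)$, this translates to the recurrence
\[
  m_j(d+1) \;=\; \sum_{k=1}^{j} m_k(d) \qquad\text{for all } j\le i, \; d\ge 0 .
\]

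Next, I prove the claimed closed form $m_i(d)=\sum_{j=1}^{i}\binom{i-j+d-1}{i-j}\, m_j(0)$ by induction on $d\ge 1$, proving the analogous identity uniformly for all $j\le i$ at each step. The base case $d=1$ is exactly the recurrence, since $\binom{i-j}{i-j}=1$. For the inductive step, I compute
\[
  m_i(d+1) \;=\; \sum_{j=1}^{i} m_j(d) \;=\; \sum_{j=1}^{i}\sum_{k=1}^{j}\binom{j-k+d-1}{j-k}\, m_k(0),
\]
exchange the order of summation, and reindex with $\ell=j-k$ to get $\sum_{k=1}^{i} m_k(0) \sum_{\ell=0}^{i-k}\binom{\ell+d-1}{\ell}$.

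The computational heart of the proof is the hockey stick identity $\sum_{\ell=0}^{m}\binom{\ell+d-1}{\ell}=\binom{m+d}{m}$, which collapses the inner sum to $\binom{i-k+d}{i-k}$ and yields the desired expression for $m_i(d+1)$. I do not anticipate any serious obstacle: once persistence is in hand the result is a standard linear recurrence computation, and the only care required is bookkeeping of indices so that the binomial identity applies cleanly.
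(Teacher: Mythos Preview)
Your proof is correct and follows essentially the same approach as the paper's own proof: both invoke the Sectional Persistence Theorem to obtain the recurrence $m_j(d{+}1)=\sum_{k\le j} m_k(d)$ for all $j\le i$, then argue by induction on $d$, swap the double sum in the inductive step, and collapse the inner sum via the hockey stick identity $\sum_{\ell=0}^{m}\binom{\ell+d-1}{\ell}=\binom{m+d}{m}$. Your write-up is slightly more explicit about needing the inductive hypothesis simultaneously for all $j\le i$, which the paper also uses but states more tersely.
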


\begin{proof}
We prove the statement by induction on $d$.
From Theorem~\ref{thm:Sectional_Persistence} it follows that 
$\M_{P/I}$ has $k$-maximal growth in degree $\delta$, for all
$k=1,\dots, i$, thus for $d=1$
$\M_{P/I}(k, \delta{+}1) =  \sum_{j=1}^{k} \M_{P/I}(j,\delta)$.

Let d>1 and suppose the stated equality holds for 
$\M_{P/I}(k, \delta+d)$ for all $k=1,\dots, i$. 
Then by the $i$-maximal growth from Theorem~\ref{thm:Sectional_Persistence},
$$\M_{P/I}(i, \delta+d+1) =
  \sum_{k=1}^{i}
    \M_{P/I}(k, \delta+d) =
  \sum_{k=1}^{i}
  \sum_{j=1}^{k}
    \binom{k-j{+}d{-}1}{k-j}\cdot\M_{P/I}(j,\delta)$$
then we swap the sums varying $j$ in $\{1,\dots,i\}$
and  $t = k-j$ with $k\in\{j,\dots,i\}$:

$
  \sum_{j=1}^{i}
\Big(  \sum_{t=0}^{i-j}
    \binom{t{+}d{-}1}{t}
\Big)
\cdot\M_{P/I}(j,\delta)
=
  \sum_{j=1}^{i}
    \binom{i-j{+}d{-}1}{i-j}\cdot\M_{P/I}(j,\delta)
$
\\
and this concludes the proof.
\end{proof}


\begin{proposition}\label{prop:SM_Persistence_HPoly}
Let $I$ be a homogeneous ideal in $P=K[x_1, \dots,x_n]$ generated
in degree $\le\delta+1$,
and let $L_1, \dots, L_n$ be generic linear forms in $P$.
If $\M_{P/I}$ has $i$-maximal growth in degree $\delta$, then
the Hilbert polynomial of
$P/(I + (L_1, \dots, L_{n-i}))$ is
\begin{eqnarray*}
   p_i(x) &=&
\sum_{j=1}^i
\binom{i{-}j{+}x{-}\delta{-}1}{i-j}\cdot\M_{P/I}(j,\delta)
 \end{eqnarray*}
In particular,
if $p_i\ne0$ 
let $k = \min\{j\in\{1,\dots,i\}\mid \M_{P/I}(j,\delta)\ne0\}$,
then 

$$p_i(x) = \frac{\M_{P/I}(k,\delta)}{(i-k)!}x^{i-k} + ... \text{ terms of lower
  degree }.$$
\end{proposition}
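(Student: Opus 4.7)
The plan is to observe that this proposition is essentially a direct translation of Lemma \ref{lemma:SM_Persistence_explicit} into the language of Hilbert polynomials, combined with a routine leading-term calculation.

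First, I would record the defining identity: by Definition~\ref{def:SM}, we have
$$\M_{P/I}(i,d) \;=\; H_{P/(I+(L_1,\dots,L_{n-i}))}(d)$$
for every $d\in\NN$. So the Hilbert polynomial $p_i(x)$ that we want to identify is the (unique) polynomial in $x$ that agrees with $\M_{P/I}(i,d)$ for all sufficiently large $d$.

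Second, I would apply Lemma \ref{lemma:SM_Persistence_explicit}: under the hypotheses, for every $d\in\NN_{>0}$
$$\M_{P/I}(i,\delta+d) \;=\; \sum_{j=1}^{i} \binom{i-j+d-1}{i-j}\cdot \M_{P/I}(j,\delta).$$
Substituting $x = \delta+d$, the right-hand side becomes
$$q(x) \;:=\; \sum_{j=1}^{i} \binom{i-j+x-\delta-1}{i-j}\cdot \M_{P/I}(j,\delta),$$
where each $\binom{i-j+x-\delta-1}{i-j}$ is interpreted as the polynomial $\frac{1}{(i-j)!}(x-\delta)(x-\delta+1)\cdots(x-\delta+i-j-1)$ in $x$ of degree $i-j$. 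Since $q(x)$ is a polynomial agreeing with $\M_{P/I}(i,x)$ for every integer $x>\delta$, uniqueness of the Hilbert polynomial gives $p_i=q$, which is the claimed formula.

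Third, for the leading term: each summand $\binom{i-j+x-\delta-1}{i-j}\cdot \M_{P/I}(j,\delta)$ has degree exactly $i-j$ in $x$, with leading coefficient $\M_{P/I}(j,\delta)/(i-j)!$. Hence, if $p_i\neq 0$ and $k$ is the smallest index in $\{1,\dots,i\}$ for which $\M_{P/I}(k,\delta)\neq 0$, the term with $j=k$ has strictly larger degree than all the others that contribute nontrivially, giving
$$p_i(x) \;=\; \frac{\M_{P/I}(k,\delta)}{(i-k)!}\,x^{i-k} \;+\;\text{lower order terms}.$$

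There isn't really a hard step here: once Lemma \ref{lemma:SM_Persistence_explicit} is available, the proof reduces to recognizing the persistence formula as a polynomial identity in $x$ and reading off the leading coefficient. The only point that deserves care is the interpretation of $\binom{i-j+x-\delta-1}{i-j}$ as a polynomial in $x$ of degree $i-j$, so that uniqueness of the Hilbert polynomial can be invoked.
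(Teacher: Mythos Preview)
Your proof is correct and follows essentially the same approach as the paper: apply Lemma~\ref{lemma:SM_Persistence_explicit}, substitute $x=\delta+d$ to recognize the resulting expression as a polynomial in $x$ agreeing with the Hilbert function for all $x>\delta$, and then read off the leading term from the highest-degree binomial. Your write-up is slightly more explicit about invoking uniqueness of the Hilbert polynomial, but otherwise matches the paper's argument.
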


\begin{proof}
The first part of the corollary is trivial from
Lemma~\ref{lemma:SM_Persistence_explicit}:
for $x>\delta$ we have
$p_i(x) = \M_{P/I}(i,x) =
  \sum_{j=1}^{i}
    \binom{{i-j{+}(x-\delta){-}1}}{{i-j}}\cdot\M_{P/I}(j,\delta).
$

We conclude by observing that
$\binom{i{-}k{+}x{-}\delta{-}1}{i-k}$ =
$\frac{({x{-}\delta{+}i{-}k{-}1})
\dots
({x{-}\delta{+}1})
({x{-}\delta})
}{(i-k)!}$
and therefore equal to
$ \frac1{(i-k)!}x^{i-k}+ (\text{terms of lower degree})
$.
\end{proof}

\begin{proposition}\label{prop:HilbertSeries}
Let $I$ be a homogeneous ideal in $P=K[x_1, \dots,x_n]$ generated
in degree $\le\delta+1$,
and let $L_1, \dots, L_n$ be generic linear forms in $P$.
If $\M_{P/I}$ has $i$-maximal growth in degree $\delta$, then
the Hilbert series of
$R_i = P/(I + (L_1, .., L_{n-i}))$ is
$$\HS_{R_i}(t)
=
\sum_{d=0}^\delta\M_{P/I}(i,d)t^d
+\bigg(\sum_{j=1}^i\frac{\M_{P/I}(j,\delta)}{(1-t)^{i-j+1}}\bigg)t^{\delta+1}.$$
\end{proposition}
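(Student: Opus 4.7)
The plan is to start directly from the definition of the Hilbert series and split the sum at the threshold degree $\delta$:
\[
\HS_{R_i}(t) \;=\; \sum_{d=0}^{\infty}\M_{P/I}(i,d)\,t^d \;=\; \sum_{d=0}^{\delta}\M_{P/I}(i,d)\,t^d \;+\; \sum_{d\ge 1}\M_{P/I}(i,\delta+d)\,t^{\delta+d}.
\]
The first sum already matches the first term on the right-hand side of the claim, so the entire task reduces to identifying the tail $\sum_{d\ge 1}\M_{P/I}(i,\delta+d)\,t^{\delta+d}$ with $\bigl(\sum_{j=1}^{i}\M_{P/I}(j,\delta)/(1-t)^{i-j+1}\bigr)t^{\delta+1}$.

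For the tail, I would substitute the closed form from Lemma~\ref{lemma:SM_Persistence_explicit}, which applies under exactly the hypothesis of the proposition and gives
\[
\M_{P/I}(i,\delta+d) \;=\; \sum_{j=1}^{i}\binom{i-j+d-1}{i-j}\M_{P/I}(j,\delta)
\]
for every $d\ge 1$. Plugging this in, factoring $t^{\delta+1}$ out front, and swapping the order of summation (finite in $j$, formal series in $d$), the tail becomes
\[
t^{\delta+1}\sum_{j=1}^{i}\M_{P/I}(j,\delta)\sum_{d\ge 1}\binom{i-j+d-1}{i-j}t^{d-1}.
\]

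The final step is the standard negative-binomial generating function identity $\sum_{m\ge 0}\binom{m+r}{r}t^{m}=1/(1-t)^{r+1}$, applied with $m=d-1$ and $r=i-j$, which turns the inner sum into $1/(1-t)^{i-j+1}$ and produces exactly the expression in the statement. No serious obstacle is expected: the content is entirely packaged in Lemma~\ref{lemma:SM_Persistence_explicit}, and the only care needed is bookkeeping, namely checking that the $d=0$ boundary term is correctly absorbed into the initial polynomial part $\sum_{d=0}^{\delta}\M_{P/I}(i,d)t^d$ rather than counted twice in the rational part.
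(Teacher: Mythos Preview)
Your proposal is correct and follows essentially the same route as the paper's own proof: split the series at degree $\delta$, apply Lemma~\ref{lemma:SM_Persistence_explicit} to the tail, swap the sums, and invoke the generating function identity $\sum_{m\ge 0}\binom{m+r}{r}t^m = (1-t)^{-r-1}$. The only cosmetic difference is the choice of index (the paper sets $k=d-\delta-1$ where you set $m=d-1$), and your closing remark about not double-counting the degree-$\delta$ term is exactly the bookkeeping the paper handles by writing the polynomial part as $\sum_{d=0}^{\delta}$ and the rational part starting at $t^{\delta+1}$.
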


\begin{proof}
By definition
$\HS_{R_i}(t)
=
\sum_{d=0}^\infty\M_{P/I}(i,d)t^d
$.
By Lemma~\ref{lemma:SM_Persistence_explicit},
we have that
$$
\sum_{d=\delta+1}^\infty\M_{P/I}(i,d)t^d
=
\sum_{d=\delta+1}^\infty  
\Big(  \sum_{j=1}^{i}
    \binom{{i-j{+}d-\delta-1}}{{i-j}}\cdot\M_{P/I}(j,\delta)
\Big)
t^d
$$
swapping  the sums and letting $k=d-\delta-1$ it becomes
\\
$
=
  \sum_{j=1}^{i}
\Big(
\sum_{k=0}^\infty  
    \binom{{i-j{+}k}}{{i-j}}
t^k
\Big)\cdot\M_{P/I}(j,\delta)\cdot t^{\delta+1}
=
\bigg(\sum_{j=1}^i\frac{\M_{P/I}(j,\delta)}{(1-t)^{i-j+1}}\bigg)t^{\delta+1}$.
Therefore, we can conclude by adding the first 
part of the series, $\sum_{d=0}^\delta\M_{P/I}(i,d)t^d$.
\end{proof}

%

The following theorem shows that we can easily read the dimension
and the multiplicity of $P/I$ from its sectional matrix.
In particular, this information may be found in the $\delta$-th column 
with $\delta<\reg(I)$ (see Example~\ref{ex:before-reg-2}).

\begin{theorem}\label{thm:dim_deg} 
Let $I$ be a homogeneous ideal in $P=K[x_1, \dots,x_n]$ generated
in degree $\le\delta+1$ such that $I_\delta{\ne} P_\delta$ and let $i=\min\{j~|~\M_{P/I}(j,\delta){\ne}0\}$. If
$\M_{P/I}(i,\delta)=\M_{P/I}(i,\delta+1)$, then
$$\dim(P/I)=n{-}i{+}1 \text{\quad and \quad}\deg(P/I)=\M_{P/I}(i,\delta).$$
\end{theorem}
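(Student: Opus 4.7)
The plan is to derive this from the Hilbert-polynomial computation in Proposition~\ref{prop:SM_Persistence_HPoly}, by first recognizing the hypothesis as a disguised form of $i$-maximal growth in degree $\delta$.

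First I would verify that $i$ is well-defined: the assumption $I_\delta \ne P_\delta$ forces $\M_{P/I}(n,\delta)=H_{P/I}(\delta)\ne 0$, so the set $\{j\mid\M_{P/I}(j,\delta)\ne 0\}$ is non-empty. By minimality of $i$, we have $\M_{P/I}(j,\delta)=0$ for all $j<i$. Combined with the hypothesis $\M_{P/I}(i,\delta+1)=\M_{P/I}(i,\delta)$, this rewrites as
$$\M_{P/I}(i,\delta+1)=\sum_{j=1}^{i}\M_{P/I}(j,\delta),$$
which is precisely $i$-maximal growth in degree $\delta$ in the sense of Definition~\ref{def:maximal-growth}.

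Next I would invoke Proposition~\ref{prop:SM_Persistence_HPoly}. Since $k=\min\{j\mid\M_{P/I}(j,\delta)\ne 0\}$ coincides with $i$, all summands with $j<i$ vanish and the only surviving binomial coefficient is $\binom{x-\delta-1}{0}=1$. Hence the Hilbert polynomial of $R_i:=P/(I+(L_1,\dots,L_{n-i}))$ is the nonzero constant
$$p_i(x)=\M_{P/I}(i,\delta).$$
A standard fact about graded $K$-algebras is that a nonzero constant Hilbert polynomial forces Krull dimension equal to $1$ and multiplicity (degree) equal to that constant; thus $\dim R_i=1$ and $\deg R_i=\M_{P/I}(i,\delta)$.

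Finally I would transfer these invariants back to $P/I$ using the behaviour of generic linear forms. In characteristic $0$, a sequence of generic linear forms on $P/I$ is superficial (equivalently, each $L_j$ avoids every minimal prime of positive dimension of the successive quotients), so each cut by $L_j$ decreases the Krull dimension by exactly one while preserving the multiplicity, until the ring becomes Artinian. Applying this $n-i$ times starting from $P/I$ and ending at the positive-dimensional ring $R_i$, we conclude
$$\dim(P/I)=\dim R_i+(n-i)=n-i+1,\qquad \deg(P/I)=\deg R_i=\M_{P/I}(i,\delta).$$
The main obstacle is the last step: one must know (or cite) that generic linear forms form a superficial sequence and hence decrease Krull dimension by one and preserve multiplicity at each stage; this is classical for graded algebras over an infinite field, so it should not require more than a brief reference.
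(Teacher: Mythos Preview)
Your argument is correct, but it follows a genuinely different route from the paper's. After the common first step (recognising the hypothesis as $i$-maximal growth in degree~$\delta$), the paper does \emph{not} work with the sectioned ring $R_i$ and lift back. Instead it stays with $P/I$ throughout: it invokes the Sectional Persistence Theorem to ensure that $\M_{P/I}(j,d)=0$ for $j<i$ and $\M_{P/I}(i,d)=\M_{P/I}(i,\delta)$ for all $d\ge\delta$, then passes to some $\delta'\ge\max\{\delta,\reg(I)\}$ where $n$-maximal growth holds, and applies the Hilbert \emph{series} formula of Proposition~\ref{prop:HilbertSeries} with the full index $n$. Since $\M_{P/I}(j,\delta')=0$ for $j<i$, the resulting expression for $\HS_{P/I}(t)$, put over the common denominator $(1-t)^{n-i+1}$, has numerator equal to $\M_{P/I}(i,\delta)+f(t)(1-t)$ for some polynomial $f$; this fraction is already reduced, and dimension and degree are read off directly from it.

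The paper's approach is entirely internal to the tools developed in the article (persistence plus the Hilbert series identity) and requires no external input. Your approach is more geometric and arguably more transparent --- one sees at once why the dimension is $n-i+1$, because the $(n-i)$-fold section has dimension~$1$ --- but it imports the classical fact that generic linear forms form a superficial sequence (each is a parameter on the previous positive-dimensional quotient, hence drops Krull dimension by one and preserves multiplicity). Both are sound; the trade-off is self-containment versus conceptual clarity.
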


%
%
\begin{proof}
The hypothesis implies that $\M_{P/I}$ has $i$-maximal growth in
degree~$\delta$,
so, by Theorem~\ref{thm:Sectional_Persistence}, 
has $j$-maximal growth in
degree~$d$, for all $d>\delta$ and $j=1,\dots,i$;
this means that $i=\min_j\{\M_{P/I}(j,d)\ne0\}$ for all $d>\delta$
and $\M_{P/I}(i,d)=\M_{P/I}(i,\delta)$ for all $d\ge\delta$.
Now, let $\delta'\ge\delta$ such that
$\M_{P/I}$ has $n$-maximal growth in degree~$\delta'$, for example 
$\delta'=\max\{\delta, \reg(I)\}$.
Applying Proposition~\ref{prop:HilbertSeries},
and setting the highest power, ${(1-t)^{n{-}i{+}1}}$, as common
denominator, it follows that
$$\HS_{P/I}(t)=\frac{\M_{P/I}(i,\delta')+f(t)(1-t)}{(1-t)^{n{-}i{+}1}}$$
for some polynomial $f(t)\in K[t]$ and the fraction above is reduced.
Therefore, the degree of its denominator, $n{-}i{+}1$, is $\dim(P/I)$,
and the evaluation of the numerator in 1, $\M_{P/I}(i,\delta)$, is $\deg(P/I)$.
\end{proof}

\begin{remark} 
 The hypothesis of Theorem~\ref{thm:dim_deg} is
equivalent to the existence of 
an integer $i$ such that $\M_{P/I}(i,\delta)=\M_{P/I}(i,\delta+1)$ 
for $\delta> r_{n-i+1}(P/I)$:
this kind of formulation should look more familiar to 
the readers of \cite{bigatti1994geometric} and \cite{ahn2007some}.
\end{remark}

\begin{example}\label{ex:before-reg-2}
Following Example~\ref{ex:before-reg}
we consider $i=2,\, \delta=2$ and then
$\M_{P/I}(1,2)=0$ and 
$\M_{P/I}(2,2)=\M_{P/I}(2,3)=1\ne0$.
We then conclude 
$\dim(P/I)=n{-}i{+}1 = 4-2+1 = 3$ and
$\deg(P/I)=\M_{P/I}(i,\delta) = \M_{P/I}(2,2) = 1$.

Note that we deduced this information from the sectional matrix
in degree~$2$, strictly smaller than $\reg(I)=4$ and also smaller than $3$, the maximal
degree of the generators of $I$.
\end{example}

\begin{remark}
  For any homogeneous ideal $I$ in $P=K[x_1,\dots,x_n]$, $\M_{P/I}$ has
  $i$-maximal growth for all degrees $\ge\reg(I)$ and for all
  $i\in\{1,\dots,n\}$.  Therefore all the results in this section
  hold replacing their hypotheses with ``$\delta\ge\reg(I)$''.
\end{remark}

\begin{example}
In Example~\ref{ex:first}, with $\reg(I)=6$, 
we have $i=3$ so $\dim(P/I) =3-3+1=1$ and
$\deg(P/I)=\M_{P/I}(3,6)=12$, 
\end{example}

\section{Maximal Growth and Greatest Common Divisor}
\label{GCD}

Now we make a subtle change: we consider the same scenario in degree
$\delta$ for an arbitrary homogeneous ideal $I$ and see that the same
conclusion hold on the \textit{truncation} $\ideal{I_{\le \delta}}$.

\begin{proposition}\label{prop:dim-deg-trunc}  
Let $I$ be a homogeneous ideal in $P=K[x_1,\dots,x_n]$.
If there exists $\delta$ such that $I_\delta{\ne}P_\delta$
and $\M_{P/I}(i,\delta)=\M_{P/I}(i,\delta{+}1)$, for $i=\min\{j{>}1~|~\M_{P/I}(j,\delta)\ne0\}$, then we have 
$$\dim(P/\ideal{ I_{\leq \delta}})=\dim(P/\ideal{ I_{\leq \delta{+}1}})=n{-}i{+}1,$$  
$$\deg(P/\ideal{ I_{\leq \delta}})=\deg(P/\ideal{ I_{\leq \delta{+}1}})=\M_{P/I}(i,\delta).$$ 
    \end{proposition}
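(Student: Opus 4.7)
The plan is to deduce both equalities from Theorem~\ref{thm:dim_deg} applied separately to the truncations $J := \ideal{I_{\le\delta}}$ and $J' := \ideal{I_{\le\delta+1}}$. By construction $J$ is generated in degree $\le\delta$ and $J'$ in degree $\le\delta+1$, so both meet the generation hypothesis of Theorem~\ref{thm:dim_deg} with parameter~$\delta$. Moreover $J_d=I_d$ for $d\le\delta$ and $J'_d=I_d$ for $d\le\delta+1$, so the sectional matrices of $P/J$, $P/J'$ and $P/I$ agree in those ranges; in particular $J_\delta=J'_\delta=I_\delta\ne P_\delta$, and the index $i$ appearing in the proposition is the same as the one the theorem extracts from either truncation in degree~$\delta$.

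The case $J'$ is immediate: since $J'_{\delta+1}=I_{\delta+1}$ the hypothesis of the proposition reads $\M_{P/J'}(i,\delta+1)=\M_{P/J'}(i,\delta)$, and Theorem~\ref{thm:dim_deg} applied to $J'$ gives $\dim(P/J')=n-i+1$ and $\deg(P/J')=\M_{P/I}(i,\delta)$. The substantive case is $J$, where $J_{\delta+1}$ may be strictly smaller than $I_{\delta+1}$, so a priori $\M_{P/J}(i,\delta+1)$ could strictly exceed $\M_{P/I}(i,\delta+1)$. I would pin it down by sandwiching.

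From $J\subseteq I$ and the hypothesis of the proposition,
$$\M_{P/J}(i,\delta+1) \;\ge\; \M_{P/I}(i,\delta+1) \;=\; \M_{P/I}(i,\delta).$$
From Bigatti--Robbiano (Theorem~\ref{thm:HI_SM}(a)) together with the minimality of $i$, which forces $\M_{P/I}(j,\delta)=0$ for $j<i$,
$$\M_{P/J}(i,\delta+1) \;\le\; \sum_{j=1}^{i}\M_{P/J}(j,\delta) \;=\; \sum_{j=1}^{i}\M_{P/I}(j,\delta) \;=\; \M_{P/I}(i,\delta).$$
The two bounds coincide, giving $\M_{P/J}(i,\delta+1)=\M_{P/J}(i,\delta)$, and a second application of Theorem~\ref{thm:dim_deg}, now to $J$, finishes the proof.

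The main obstacle is the tightness of the upper bound for $J$: by Remark~\ref{rem:genrgin}, $\rgin(J)$ can in principle acquire new minimal generators in degree $\delta+1$, which is exactly the failure mode of $i$-maximal growth of $\M_{P/J}$ in degree $\delta$ and would inflate $\M_{P/J}(i,\delta+1)$. The minimality of $i$ is precisely what makes the Bigatti--Robbiano sum collapse to a single term and rules this scenario out.
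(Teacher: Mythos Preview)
Your proof is correct and follows the same strategy as the paper's: reduce to Theorem~\ref{thm:dim_deg} for each of the two truncations, with the only substantive step being the verification of $\M_{P/J}(i,\delta{+}1)=\M_{P/J}(i,\delta)$ for $J=\ideal{I_{\le\delta}}$. The paper handles that step by translating through $\rgin$ via Remark~\ref{rem:BReq_Nomingen} (the hypothesis, plus the vanishing of rows below~$i$, is $i$-maximal growth for $\M_{P/I}$, hence no minimal generators of $\rgin(I)$ in degree $\delta{+}1$ in $x_1,\dots,x_i$, hence the same for $\rgin(J)\subseteq\rgin(I)$ since $\rgin(J)_\delta=\rgin(I)_\delta$); your numerical sandwich is the equivalent argument phrased directly on the sectional matrix, bypassing the $\rgin$ translation.
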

\begin{proof} 
By construction
 $\M_{P/I}(j,\delta)=\M_{P/\ideal{ I_{\leq \delta}}}(j,\delta)$ 
and $\M_{P/I}(j,\delta{+}1)=\M_{P/\ideal{ I_{\leq \delta{+}1}}}(j,\delta{+}1)$ for
all $j=1,\dots, n$. 
From Remark~\ref{rem:BReq_Nomingen} it follows that
$\rgin(I)$ has no minimal
generators in degree $\delta{+}1$ in $x_1,\dots, x_i$, and therefore nor does  
$\rgin(\ideal{ I_{\leq \delta}})\subseteq\rgin(I)$.
It then follows that $$\M_{P/I}(j,\delta{+}1)=\M_{P/\ideal{ I_{\leq \delta}}}(j,\delta{+}1)$$ for all
$j=1,\dots, i$. This implies that
$$\M_{P/\ideal{ I_{\leq \delta}}}(i,\delta{+}1) =
\M_{P/\ideal{ I_{\leq \delta}}}(i,\delta)\ne0,$$
$$\M_{P/\ideal{ I_{\leq \delta{+}1}}}(i,\delta{+}1) =
\M_{P/\ideal{ I_{\leq \delta{+}1}}}(i,\delta)\ne0,$$
and $i=\min\{j~|~\M_{P/\ideal{ I_{\leq \delta}}}(j,\delta)\ne0\}=\min\{j~|~\M_{P/\ideal{ I_{\leq \delta{+}1}}}(j,\delta)\ne0\}$.
Now we can apply Theorem~\ref{thm:dim_deg} and get the conclusions.
\end{proof}

\begin{corollary}\label{cor:ksubvar_BRequal} 
Let $I$ be a homogeneous ideal in $P=\KK[x_1 ,\dots, x_n]$. If there exists
$\delta$ such that $I_\delta{\ne}P_\delta$ and
$\M_{P/I}$ has $n$-maximal growth in degree $\delta$, 
let $i=\min\{j{>}1 \mid \M_{P/I}(j,\delta){\ne}0\}$, then
$\ideal{ I_{\leq \delta}}=\ideal{ I_{\leq \delta{+}1}}$,
$\dim(P/\ideal{ I_{\leq \delta}})=n{-}i{+}1$, and
$\deg(P/\ideal{ I_{\leq \delta}})=\M_{P/I}(i,\delta).$
\end{corollary}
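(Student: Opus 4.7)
The plan is to reduce the corollary to Proposition \ref{prop:dim-deg-trunc} by verifying its numerical hypothesis, together with a direct argument for the ideal equality.

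First, I would use the $n$-maximal growth of $\M_{P/I}$ in degree $\delta$ together with Remark \ref{rem:BReq_Nomingen} to conclude that $I$ has no minimal generators of degree $\delta{+}1$; equivalently, $I_{\delta{+}1} = P_1 \cdot I_\delta$. This immediately yields $\ideal{I_{\le\delta}} = \ideal{I_{\le\delta{+}1}}$, the first assertion.

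Next, set $I' = \ideal{I_{\le\delta}}$. Since $I'$ coincides with $I$ in every degree $\le \delta{+}1$, and each entry $\M_{P/J}(j,d)$ depends only on $J_{\le d}$, we have $\M_{P/I'}(j,d) = \M_{P/I}(j,d)$ for all $j$ and all $d \le \delta{+}1$. Hence $\M_{P/I'}$ also has $n$-maximal growth in degree~$\delta$, and because $I'$ is generated in degree $\le \delta$, the original Persistence Theorem~\ref{thm:SM_Persistence} applies to $I'$ and gives $j$-maximal growth of $\M_{P/I'}$ for every $j\in\{1,\dots,n\}$ in every degree $\ge\delta$. Specialising at $j=i$ in degree~$\delta$ yields
$$\M_{P/I}(i,\delta{+}1) \;=\; \M_{P/I'}(i,\delta{+}1) \;=\; \sum_{j=1}^{i}\M_{P/I'}(j,\delta) \;=\; \sum_{j=1}^{i}\M_{P/I}(j,\delta).$$
The definition of $i$ makes $\M_{P/I}(j,\delta)=0$ for $1<j<i$, and iterating Green's inequality (Theorem~\ref{thm:HI_SM}.\ref{item:HI_HSec}) propagates the vanishing down to $j=1$, so the right-hand side collapses to $\M_{P/I}(i,\delta)$. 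The hypothesis $\M_{P/I}(i,\delta)=\M_{P/I}(i,\delta{+}1)$ of Proposition \ref{prop:dim-deg-trunc} is thereby established.

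Finally, I would invoke Proposition \ref{prop:dim-deg-trunc} to read off $\dim(P/\ideal{I_{\le\delta}}) = n{-}i{+}1$ and $\deg(P/\ideal{I_{\le\delta}}) = \M_{P/I}(i,\delta)$. The main obstacle is the careful passage of $n$-maximal growth from $I$ to the truncation $I'$: one must verify that the sectional matrices agree in the relevant range, apply the Persistence Theorem to $I'$ (where the needed generator-degree hypothesis genuinely holds), and then translate the resulting $i$-maximal equality back to $I$ itself in the single degree required.
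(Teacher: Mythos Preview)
Your proof is correct and follows the same overall strategy as the paper: deduce the ideal equality from Remark~\ref{rem:BReq_Nomingen}, then verify the hypothesis $\M_{P/I}(i,\delta)=\M_{P/I}(i,\delta{+}1)$ of Proposition~\ref{prop:dim-deg-trunc} by establishing $i$-maximal growth in degree~$\delta$ and collapsing the sum.

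The one difference worth noting is your detour through $I'=\ideal{I_{\le\delta}}$ in order to legitimately invoke the Persistence Theorem. The paper avoids this entirely: once Remark~\ref{rem:BReq_Nomingen} tells you that $\rgin(I)$ has no minimal generators of degree~$\delta{+}1$, it has in particular none in $K[x_1,\dots,x_i]$, and the same remark (read in the reverse direction) gives $i$-maximal growth of $\M_{P/I}$ in degree~$\delta$ directly for $I$ itself, with no generator-degree hypothesis needed. So your passage to $I'$, the check that the sectional matrices agree through degree~$\delta{+}1$, and the application of Theorem~\ref{thm:SM_Persistence} to $I'$ are all unnecessary; the paper simply cites persistence (really, Remark~\ref{rem:BReq_Nomingen} suffices) for this one-degree implication. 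Your route is more explicit about hypotheses but longer; the paper's is shorter but relies on the reader unpacking the remark. Your explicit use of Green's inequality to force $\M_{P/I}(1,\delta)=0$ is a detail the paper leaves implicit.
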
 

\begin{proof}
By hypothesis $\M_{P/I}$ has $n$-maximal growth in degree $\delta$, hence by Remark~\ref{rem:BReq_Nomingen} it follows that
$I$ has no minimal
generators in degree $\delta{+}1$, and therefore $\ideal{ I_{\leq \delta}}=\ideal{ I_{\leq \delta{+}1}}$.
Moreover, from Theorem~\ref{thm:SM_Persistence}
it has $i$-maximal growth in degree $\delta$.
Hence we have the equality
$\M_{P/I}(i,\delta{+}1) =\sum_{j=1}^i  \M_{P/I}(j,\delta) = \M_{P/I}(i,\delta) \ne 0$
and the conclusion follows from 
Proposition~\ref{prop:dim-deg-trunc}.
\end{proof}

\begin{example}\label{ex:dim-deg}
Consider the polynomial ring $P=\QQ[x,y,z,t]$ and the ideal
$I=(x^3, x^2y, xy^2, xyz^2,  xyzt^3)$ of $P$. 
Then 
$$
\M_{P/I}=
\begin{array}{rcccccccccc} 
  _0 & _1 & _2 & _3  & _4 & _5 & _6 & _7 & \dots\\
  1 & 1 & 1 & 0 & 0 & 0 & 0 & 0 &  \dots\\
  1 & 2 & 3 & 1 & 1 & 1 & 1 & 1 & \dots\\
  1 & 3 & 6 & 7 & 7 & 8 & 9 & 10 &  \dots\\
  1  & 4 & 10 & 17 & 24 & 32 & 40 & 50 &\dots
\end{array} 
$$
$$
\M_{P/I_{\leq 3}}=
\begin{array}{rcccccccccc} 
  _0 & _1 & _2 & _3  & _4 &  \dots\\
  1 & 1 & 1 & 0 & 0 &   \dots\\
  1 & 2 & 3 & 1 & 1 &  \dots\\
  1 & 3 & 6 & 7 & 8 &   \dots\\
  1  & 4 & 10 & 17 & 25 &\dots
\end{array} 
~~\M_{P/I_{\leq 4}}=
\begin{array}{rcccccccccc} 
  _0 & _1 & _2 & _3  & _4 &_5 &  \dots\\
  1 & 1 & 1 & 0 & 0 & 0 &   \dots\\
  1 & 2 & 3 & 1 & 1 & 1 & \dots\\
  1 & 3 & 6 & 7 & 7 & 8 &   \dots\\
  1  & 4 & 10 & 17 & 24 & 32 &\dots
\end{array} 
$$
%
%
 We see that $\M_{P/I}$ has $i=2$-maximal growth in degree~$\delta=3$.
Then by Proposition~\ref{prop:dim-deg-trunc}, we have that
$\dim(P/\ideal{I_{\le3}})=\dim(P/\ideal{I_{\le4}})= 3$, and  $\deg(P/\ideal{I_{\le3}})=\deg(P/\ideal{I_{\le4}})= 1$,
regardless what happens in $\M_{P/I}(j,\delta)$ for $j>i=2$.
\end{example}

The following proposition is the generalization of 
Proposition~1.6 of \cite{bigatti1994geometric}.

\begin{proposition}\label{prop:BRequal_d-reg} 
Let $I$ be a homogeneous ideal in $P=\KK[x_1, \ldots , x_n ]$.
If $\M_{P/I}$ has $n$-maximal growth in degree $\delta$, 
then $\reg(\ideal{ I_{\leq \delta} })\le \delta$.
\end{proposition}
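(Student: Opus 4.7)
The plan is to reduce the statement to Lemma~\ref{lemma:no_gens_i} applied to the truncated ideal $J:=\ideal{I_{\leq\delta}}$ itself, using the fact that the regularity of a homogeneous ideal equals the largest degree of a minimal generator of its $\rgin$ (Bayer--Stillman).

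First, I would compare $\M_{P/J}$ and $\M_{P/I}$. Since $J_d = I_d$ for all $d\le\delta$, and adding generic linear forms preserves this equality in degrees $\le\delta$, we get $\M_{P/J}(j,d)=\M_{P/I}(j,d)$ for all $j\in\{1,\dots,n\}$ and all $d\le\delta$. In particular the $\delta$-th columns agree. In degree $\delta+1$ one has $J_{\delta+1}\subseteq I_{\delta+1}$, so $\M_{P/J}(n,\delta+1)\ge\M_{P/I}(n,\delta+1)$; on the other hand Theorem~\ref{thm:HI_SM}.(a) applied to $J$ gives
\[
\M_{P/J}(n,\delta+1)\le\sum_{j=1}^n\M_{P/J}(j,\delta)=\sum_{j=1}^n\M_{P/I}(j,\delta)=\M_{P/I}(n,\delta+1),
\]
where the last equality uses the $n$-maximal growth hypothesis on $\M_{P/I}$. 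So all inequalities are equalities and $\M_{P/J}$ also has $n$-maximal growth in degree~$\delta$.

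Next, by Remark~\ref{rem:BReq_Nomingen}, $\rgin(J)$ has no minimal generators of degree~$\delta+1$ in $K[x_1,\dots,x_n]$. The ideal $J$ is generated in degree $\le\delta$, so a fortiori in degree $\le\delta+1$, which is exactly the hypothesis of Lemma~\ref{lemma:no_gens_i} (taken with $i=n$). Applying the lemma, $\rgin(J)$ has no minimal generators of any degree $>\delta$.

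Finally, since we are in characteristic $0$ and DegRevLex is used, the regularity of $J$ coincides with the highest degree of a minimal generator of $\rgin(J)$. The previous step forces this degree to be $\le\delta$, giving $\reg(\ideal{I_{\leq\delta}})\le\delta$. The main point (and the only step that requires real work beyond bookkeeping) is the equality of growth $\M_{P/J}(n,\delta+1)=\M_{P/I}(n,\delta+1)$; everything else is a direct invocation of results already established in the paper.
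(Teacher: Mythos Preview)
Your proof is correct and follows essentially the same route as the paper's: show that $\overline I=\ideal{I_{\le\delta}}$ again has $n$-maximal growth in degree~$\delta$, then invoke Lemma~\ref{lemma:no_gens_i} (with $i=n$) to conclude that $\rgin(\overline I)$ has no minimal generators in degree $>\delta$, hence $\reg(\overline I)\le\delta$. The only cosmetic difference is in the first step: the paper observes via Remark~\ref{rem:BReq_Nomingen} that $I$ has no minimal generators in degree~$\delta{+}1$, so $I_{\delta+1}=\overline I_{\delta+1}$ and the full $(\delta{+}1)$-columns of the two sectional matrices agree, whereas you obtain $n$-maximal growth for $\overline I$ directly by your sandwich inequality using Theorem~\ref{thm:HI_SM}.(\ref{item:HI_Sum}); both are fine.
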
  
\begin{proof}
Let $\overline{I} = \langle I_{\leq \delta} \rangle$. 
  By construction $\M_{P/I}(j,d)=\M_{P/\overline{I}}(j,d)$ for all
  $j=1,\dots, n$ and $0\le d\le \delta$. By
  Remark~\ref{rem:BReq_Nomingen}, $I$ has no minimal generators in
  degree 
  $\delta{+}1$, and hence $\M_{P/I}(j,\delta{+}1)=\M_{P/\overline{I}}(j,\delta{+}1)$ for all
  $j=1,\dots, n$. This implies that 
 $\M_{P/\overline I}$ has $n$-maximal growth in degree $\delta$, and then, 
 by the Persistence Theorem~\ref{thm:SM_Persistence},
it has $n$-maximal growth in all degrees $>\delta$.
By Lemma~\ref{lemma:rgin},
  $\M_{P/\overline{I}}=\M_{P/\rgin(\overline{I})}$ and hence by
  Lemma~\ref{lemma:no_gens_i}, $\rgin(\overline{I})$ has no minimal
  generators  of degree $> \delta$, and then
  $\reg(\overline{I}) 
\le \delta$.
\end{proof}

In the rest of this section, we generalize some results of
  \cite{bigatti1994geometric} and \cite{ahn2007some} about the
  existence a common factor when there is a certain kind of maximal
  growth.  

\begin{corollary}\label{cor:GCD_2row} 
Let $I$ be a homogeneous ideal in $P=\KK[x_1 ,\dots, x_n]$.
If there exists $\delta$ such that $I_\delta\ne \{0\}$
  and $\M_{P/I}(2,\delta)=\M_{P/I}(2,\delta{+}1)$
  (i.e.~has 2-maximal growth in degree~$\delta$) then $\ideal{I_{\le\delta}}$ has a GCD
  of degree $\M_{P/I}(2,\delta)$. Furthermore, $\ideal{I_{\le\delta{+}1}}$ shares the same GCD.
\end{corollary}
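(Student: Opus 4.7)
The approach is to invoke Proposition~\ref{prop:dim-deg-trunc} with $i = 2$ and then translate the resulting codimension-one information into a GCD statement, exploiting that $P$ is a UFD. In the nontrivial case one has $\M_{P/I}(2,\delta)\ne 0$ (the claim about a ``GCD of degree $0$'' is vacuous otherwise since any unit is a GCD), so $2 = \min\{j > 1 \mid \M_{P/I}(j,\delta)\ne 0\}$, and the hypothesis $\M_{P/I}(2,\delta)=\M_{P/I}(2,\delta+1)$ is exactly the one needed in Proposition~\ref{prop:dim-deg-trunc} at index $i=2$. Applying it yields
\[
\dim(P/\ideal{I_{\le\delta}})=\dim(P/\ideal{I_{\le\delta+1}})=n-1,\quad
\deg(P/\ideal{I_{\le\delta}})=\deg(P/\ideal{I_{\le\delta+1}})=\M_{P/I}(2,\delta).
\]

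Next I would convert this codimension-one data into a GCD. Since $P$ is a UFD, every height-one prime is principal, and every ideal primary to such a prime $(p)$ has the form $(p^e)$ because $P_{(p)}$ is a DVR. Writing the primary decomposition of $\ideal{I_{\le\delta}}$, its unmixed codimension-one part is therefore $\bigcap_i (p_i^{e_i}) = (p_1^{e_1}\cdots p_r^{e_r})$, a principal ideal generated by $f := p_1^{e_1}\cdots p_r^{e_r}$. A short check identifies $f$ with $\gcd(\ideal{I_{\le\delta}})$: on the one hand $\ideal{I_{\le\delta}}\subseteq (f)$, so $f$ divides $\gcd(\ideal{I_{\le\delta}})$; on the other, any principal ideal containing $\ideal{I_{\le\delta}}$ must contain its unmixed codim-one part, so its generator divides $f$. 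The length-multiplicity formula for the degree of a codim-one quotient then gives $\deg f = \sum_i e_i\, \deg p_i = \deg(P/\ideal{I_{\le\delta}}) = \M_{P/I}(2,\delta)$.

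For the ``furthermore'' clause, the inclusion $\ideal{I_{\le\delta}} \subseteq \ideal{I_{\le\delta+1}}$ forces the GCD $f'$ of the larger ideal to divide $f$, while Proposition~\ref{prop:dim-deg-trunc} guarantees $\deg f' = \M_{P/I}(2,\delta) = \deg f$. Hence $f' = f$ up to a scalar. The main subtlety I anticipate is pinning down ``degree of the quotient equals degree of the GCD'' in the codimension-one case; this reduces to the DVR structure of $P_{(p)}$ at a height-one prime together with the standard length-multiplicity formula for the Hilbert polynomial, which together turn the primary decomposition into an honest prime factorization in the UFD $P$.
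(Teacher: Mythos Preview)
Your argument is correct and follows essentially the same route as the paper: handle the case $\M_{P/I}(2,\delta)=0$ separately, then in the main case apply Proposition~\ref{prop:dim-deg-trunc} with $i=2$ to obtain $\dim(P/\ideal{I_{\le\delta}})=n-1$ and $\deg(P/\ideal{I_{\le\delta}})=\M_{P/I}(2,\delta)$, and finally convert this codimension-one information into the existence of a common factor of the right degree. The paper phrases this last step geometrically (``defines a hypersurface of degree $k$, \ie~$\ideal{I_{\le\delta}}=(F)\cap J$ with $\dim(J)<n-1$''), while you spell out the UFD/primary-decomposition mechanism behind it; your version is more explicit but the content is the same. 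Your treatment of the ``furthermore'' clause, using the inclusion $\ideal{I_{\le\delta}}\subseteq\ideal{I_{\le\delta+1}}$ together with equality of degrees, is in fact more complete than the paper's ``Similarly''.

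One small point: the case $\M_{P/I}(2,\delta)=0$ is not quite vacuous. The assertion ``the GCD has degree $0$'' is a genuine claim---it says there is \emph{no} nontrivial common factor---and needs a short argument (e.g.\ a nontrivial common factor would force $\dim(P/\ideal{I_{\le\delta}})\ge n-1$, contradicting the persistence of $\M_{P/\ideal{I_{\le\delta}}}(2,d)=0$ for $d\ge\delta$). You should not simply dismiss this case.
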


\begin{proof}
From $I_\delta{\ne}\{0\}$ it follows that $x_1^\delta\in\rgin(I)$,
and then $\M_{P/I}(1,\delta)=0$. If $\M_{P/I}(2,\delta)=0$ then
$I_\delta=P_\delta$ has GCD $=1$ of degree~$0$. 
Otherwise, by  Proposition~\ref{prop:dim-deg-trunc} $\dim(P/\ideal{ I_{\le_\delta}})=\dim(P/\ideal{ I_{\le_\delta{+}1}})=n{-}1$ and
  $\deg(P/\ideal{ I_{\le_\delta}})=\deg(P/\ideal{ I_{\le_{\delta{+}1}}})=k=\M_{P/I}(2,\delta)$.
This means that $\ideal{ I_{\le_\delta}}$ defines a hypersurface of degree $k$, 
\ie~$\ideal{ I_{\le_\delta}}= (F)\cap J$ with
 $\dim(J)<n{-}1$ and  $\deg(F)=k$.
Therefore $\ideal{I_{\le\delta}}\subseteq (F)$
 as claimed. Similarly for $\ideal{ I_{\le_{\delta{+}1}}}$.
\end{proof}





Following the statement of Corollary~\ref{cor:GCD_2row},
 and along the line of ideas in \cite{bigatti1994geometric},
we give a new definition for the potential GCD,
based on the sectional matrix instead of the Hilbert function.

\begin{definition} 
  Let $I$ be a homogeneous ideal in $P=K[x_1 ,\dots, x_n]$
 such that  $I_\delta\ne\{0\}$.
The \textbf{$\M$-potential degree of the  GCD} of $I_\delta$ is
  $k=\M_{P/I}(2,\delta)$.
\end{definition}

The following corollary is the generalization of 
Proposition~2.7 of \cite{bigatti1994geometric} and of Corollary 5.2 of 
\cite{ahn2007some}.

\begin{corollary}\label{cor:GCD_BRequal} 
  Let $I$ be a homogeneous ideal in $P=K[x_1 ,\dots, x_n]$ and let
  $\delta$ be such that $I_\delta\neq \{0\}$.  
  Let $k$ be the $\M$-potential
  degree of the GCD of $I_\delta$.  If
  $\M_{P/I}$ has $i$-maximal growth in degree $\delta$ for some $i\ge2$,
  then $\ideal{I_{\le\delta}}$ and $\ideal{I_{\le{\delta{+}1}}}$  share the same GCD, $F$, of degree $k$. 
\end{corollary}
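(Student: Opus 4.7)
The plan is to reduce this statement to the already-proved Corollary~\ref{cor:GCD_2row} by showing that $i$-maximal growth in degree~$\delta$ with $i\ge 2$ automatically implies $2$-maximal growth in the same degree. Once this reduction is in place, the common GCD $F$ of degree $k$ is transported directly from that earlier result.

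First I would apply Remark~\ref{rem:BReq_Nomingen}, which gives the combinatorial equivalence: $\M_{P/I}$ has $i$-maximal growth in degree~$\delta$ if and only if $\rgin(I)$ has no minimal generators of degree $\delta{+}1$ lying in $K[x_1,\dots,x_i]$. Because $K[x_1,x_2]\subseteq K[x_1,\dots,x_i]$ when $i\ge 2$, the absence of such generators in the larger subring forces their absence in the smaller one. Applying Remark~\ref{rem:BReq_Nomingen} in the reverse direction then yields $2$-maximal growth of $\M_{P/I}$ in degree~$\delta$.

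With $2$-maximal growth in hand and the standing hypothesis $I_\delta\ne\{0\}$, I would invoke Corollary~\ref{cor:GCD_2row}. It produces a GCD $F$ of $\ideal{I_{\le\delta}}$ of degree $\M_{P/I}(2,\delta)=k$, and simultaneously asserts that $\ideal{I_{\le\delta+1}}$ shares the same $F$. This is exactly the conclusion claimed, so nothing further remains.

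The argument is essentially a one-line monotonicity observation combined with an appeal to the previous corollary, so there is no serious obstacle; the only subtlety is recognizing that the $i$-maximal growth condition is monotone in $i$, which becomes transparent once one passes through $\rgin(I)$ via Remark~\ref{rem:BReq_Nomingen}.
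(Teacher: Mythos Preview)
Your proof is correct and follows essentially the same approach as the paper: reduce to Corollary~\ref{cor:GCD_2row} by first showing that $i$-maximal growth in degree~$\delta$ forces $2$-maximal growth in degree~$\delta$. The only difference is that the paper obtains this monotonicity by citing Theorem~\ref{thm:Sectional_Persistence}, whereas you extract it directly from Remark~\ref{rem:BReq_Nomingen}; your route is in fact slightly cleaner, since it avoids invoking the generation-degree hypothesis of Theorem~\ref{thm:Sectional_Persistence}, which is not assumed in the corollary.
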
 

\begin{proof}
By Theorem \ref{thm:Sectional_Persistence}, if $\M_{P/I}$
has $i$-maximal growth in degree~$\delta$, then it has $2$-maximal growth in degree~$\delta$.
Therefore we conclude by Corollary~\ref{cor:GCD_2row}.
\end{proof}

Let us see this result in action on an example.
\begin{example}
 Consider the polynomial ring $P=\QQ[x,y,z]$ and the ideal
 $I=(x^3+y^3, 
\; x^2+3xy+2y^2-xz-yz, 
\; x^4 +x^3y,
\; xy^4-16xyz^3,  
\break 
y^5-3xy^3z-4y^4z+12xyz^3-25y^3z^2+100yz^4)$ of $P$.
 Then 
 $$
 \M_{P/I}=
\begin{array}{rcccccccccc} 
  _0 & _1 & _2 & _3  & _4 & _5 & _6 & _7 & _8 & _9 & \dots\\
  1 & 1 & 0 & 0 & 0 & 0 & 0 & 0 & 0 & 0 & \dots\\
  1 & 2 & 2 & 1 & 1 & 0 & 0 & 0 & 0 & 0 & \dots\\
  1 & 3 & 5 & 6 & 6 & 4 & 3 & 2 & 1& 1 &\dots
\end{array} 
$$
We see that $\M_{P/I}$ has $2$-maximal growth in degree~$3$
and indeed both
 $I_3$ and $I_4$ have a GCD of degree~$k = \M_{P/I}(2,3)= \M_{P/I}(2,4)=1$. 
 Indeed, a direct computation shows that the GCD is $x+y$.
%
\end{example}

\section{Saturated ideals}
\label{saturated}

  


A homogeneous ideal $I$ in $P{=}K[x_1,\ldots,x_n]$ 
is \textbf{saturated} 
if the irrelevant
maximal ideal ${\mathfrak{m}}{=} ( x_1 , \ldots,x_n)$
 is not an associated prime ideal,
 i.e. $(I:\mathfrak{m})=I$. 
For any homogeneous ideal $I$ of $P$, the \textbf{saturation} of $I$,
denoted $I^{\sat}$, is defined by
$I^{\sat} := \{ f \in P\mid f\mathfrak{m}^{\ell} \subseteq
I \text{ for some integer } \ell \}$.

In this section we apply the results obtained previously to the case of saturated ideals.

\begin{remark}\label{rem:sat}
It is well known that for any homogeneous ideal $J$ 
there exists $\ell \in \NN$ such that $J^{\sat} = J:\mathfrak{m}^\ell$
and therefore   $J_d= (J^{\sat})_d$ for all $ d\gg 0$.
\end{remark}

\begin{remark}\label{rem:satginnox_n} (Bayer-Stillman) 
Let $I$ be a homogeneous ideal of $P$. Then $\rgin(I^{\sat})=\rgin(I)_{x_n\to0}$.
  This shows that if $I$ is saturated, then $\rgin(I)$ has no minimal
  generators involving $x_n$.
\end{remark}


%
%
%
%
%
%

\begin{lemma}\label{lemma:BR_n-1row}
Let $I$ be a saturated ideal in $P=K[x_1, \ldots , x_n]$.
Then $\M_{P/I}$ has $n$-maximal growth in degree~$\delta$
if and only if it has $(n{-}1)$-maximal growth in degree~$\delta$.
\end{lemma}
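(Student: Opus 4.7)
The plan is to translate the equivalence into a statement about the minimal generators of $\rgin(I)$, using the characterization in Remark~\ref{rem:BReq_Nomingen}, and then exploit the special shape of $\rgin(I)$ for saturated ideals given by Remark~\ref{rem:satginnox_n} (Bayer--Stillman).

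More precisely, by Remark~\ref{rem:BReq_Nomingen} the condition that $\M_{P/I}$ has $i$-maximal growth in degree $\delta$ is equivalent to $\rgin(I)$ having no minimal generator of degree $\delta{+}1$ in $K[x_1,\dots,x_i]$. Specializing this to $i=n$ and $i=n{-}1$, I would rephrase the two statements in the lemma as:
\begin{itemize}
\item ($n$-max growth) $\rgin(I)$ has no minimal generator of degree $\delta{+}1$ at all;
\item ($(n{-}1)$-max growth) $\rgin(I)$ has no minimal generator of degree $\delta{+}1$ in $K[x_1,\dots,x_{n-1}]$.
\end{itemize}

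For the implication ($n$-max growth $\Rightarrow$ $(n{-}1)$-max growth) no saturation hypothesis is needed: the former condition is a priori stronger. For the converse, I would invoke Remark~\ref{rem:satginnox_n}: since $I$ is saturated, every minimal generator of $\rgin(I)$ lives in $K[x_1,\dots,x_{n-1}]$. Hence "no minimal generator of degree $\delta{+}1$ in $K[x_1,\dots,x_{n-1}]$" is the same as "no minimal generator of degree $\delta{+}1$" in $\rgin(I)$, giving $n$-maximal growth in degree $\delta$.

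There is no real obstacle here; the entire content of the lemma is precisely Bayer--Stillman's observation that saturation kills the role of the last variable in $\rgin(I)$, combined with the combinatorial translation of $i$-maximal growth into the absence of $\rgin$-generators in degree $\delta{+}1$ in $K[x_1,\dots,x_i]$. The only care needed is to cite the right two earlier results in the right order and to note that the nontrivial direction is the one that uses saturation.
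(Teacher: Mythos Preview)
Your proposal is correct and follows essentially the same approach as the paper: both directions are translated, via Remark~\ref{rem:BReq_Nomingen}, into statements about minimal generators of $\rgin(I)$ in degree $\delta{+}1$, and the nontrivial implication uses Bayer--Stillman (Remark~\ref{rem:satginnox_n}) to rule out generators divisible by $x_n$. The only cosmetic difference is that for the easy direction the paper cites Theorem~\ref{thm:HI_SM}.(\ref{item:HI_Sum}) directly, whereas you argue through the generator characterization; both are immediate.
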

\begin{proof} By Theorem~\ref{thm:HI_SM}.(a)
 $n$-maximal growth implies $(n{-}1)$-maximal growth.

Suppose now 
 $\M_{P/I}$ has $(n{-}1)$-maximal growth.
By Remark~\ref{rem:stronglystable} this implies that $\rgin(I)$ has no
minimal generators in $x_1,\dots,x_{n-1}$ in degree $\delta{+}1$.
Moreover, since $\rgin(I)$ is saturated, 
from Remark~\ref{rem:satginnox_n}
there are no minimal generators divisible
by $x_n$.
With no minimal generators in degree $\delta{+}1$
$\M_{P/\rgin(I)}$, and therefore $\M_{P/I}$,
has also $n$-maximal growth.
\end{proof}

In general the truncation of a saturated ideal is not saturated, as the following example shows. To guarantee
that also the truncation is saturated we need some additional
hypothesis, 
see Lemma~\ref{lemma:saturato}.
\begin{example}\label{ex:robbiano} 
Consider the polynomial ring $P=\QQ[x,y,z,t]$ and the ideal 
$I=(yz -xt,  \;z^3 -yt^2,  \;xz^2 -y^2t,  \;y^3 -x^2z,\; x^3, \;x^2y^2)$. 
This ideal is saturated, however, a direct computation shows that the truncation $I_{\leq 3}$
is not saturated.
\end{example}

The following lemma is the generalization of 
Lemma~1.4 of \cite{bigatti1994geometric}.

\begin{lemma}\label{lemma:saturato} 
Let $I$ be a saturated ideal in $P=K[x_1, \ldots , x_n]$.
If $\M_{P/I}$ has $(n{-}1)$-maximal growth in degree $\delta$
then 
the ideal $\langle I_{\leq \delta} \rangle$ is saturated.
\end{lemma}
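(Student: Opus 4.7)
The plan is to show that $\overline{I}:=\langle I_{\leq\delta}\rangle$ has the property that $\rgin(\overline{I})$ has no minimal generators involving $x_n$; saturation of $\overline{I}$ will then follow from Remark~\ref{rem:satginnox_n}. The strategy is to reduce everything to a comparison with $\rgin(I)$, whose saturation I can use for free.

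First, I would upgrade the hypothesis using Lemma~\ref{lemma:BR_n-1row}: since $I$ is saturated, the $(n{-}1)$-maximal growth of $\M_{P/I}$ in degree~$\delta$ promotes to $n$-maximal growth in degree~$\delta$. Proposition~\ref{prop:BRequal_d-reg} then yields $\reg(\overline{I})\leq\delta$, so every minimal generator of $\rgin(\overline{I})$ has degree at most~$\delta$.

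Next I would compare $\rgin(\overline{I})$ with $\rgin(I)$. The inclusion $\overline{I}\subseteq I$ gives $\rgin(\overline{I})\subseteq\rgin(I)$ (both are obtained from a change of coordinates in a common non-empty Zariski-open set), and the equality $\overline{I}_d=I_d$ for $d\leq\delta$ equates the Hilbert functions in that range, forcing $\rgin(\overline{I})_d=\rgin(I)_d$ as $K$-vector spaces for every $d\leq\delta$. From this, any minimal generator $g$ of $\rgin(\overline{I})$---necessarily of degree at most~$\delta$ by the previous step---is also a minimal generator of $\rgin(I)$: a proper divisor of $g$ would have strictly smaller degree, would therefore lie in $\rgin(I)_{<\delta}=\rgin(\overline{I})_{<\delta}$, and would contradict the minimality of $g$ in $\rgin(\overline{I})$. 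Since $I$ is saturated, Remark~\ref{rem:satginnox_n} ensures that no minimal generator of $\rgin(I)$ involves $x_n$, so neither does any minimal generator of $\rgin(\overline{I})$, which is exactly what was needed.

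The step I expect to be most delicate is the identification of minimal generators of $\rgin(\overline{I})$ with those of $\rgin(I)$; it requires combining the regularity bound $\reg(\overline{I})\leq\delta$ with the Hilbert-function comparison to rule out any ``new'' small-degree minimal generators appearing in $\rgin(\overline{I})$ that are not already minimal generators of $\rgin(I)$. Everything else---Lemma~\ref{lemma:BR_n-1row}, Proposition~\ref{prop:BRequal_d-reg}, and the Bayer--Stillman direction of Remark~\ref{rem:satginnox_n}---is invoked off the shelf.
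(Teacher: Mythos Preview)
Your argument is correct, but it is genuinely different from the paper's. The paper proves the lemma by a direct degree-by-degree comparison of $\overline{I}=\langle I_{\le\delta}\rangle$ with its saturation $\widetilde{I}=\overline{I}^{\sat}$: the cases $d\le\delta$ and $d\gg0$ are easy (using that $I$ is saturated and Remark~\ref{rem:sat}), and the intermediate range is handled by choosing the largest $d$ with $\overline{I}_d\subsetneq\widetilde{I}_d$ and deriving a contradiction from the Persistence Theorem~\ref{thm:SM_Persistence} applied to $\overline{I}$ together with the inequality of Theorem~\ref{thm:HI_SM}(a) applied to $\widetilde{I}$. In particular the paper does \emph{not} invoke Proposition~\ref{prop:BRequal_d-reg}.

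Your route is shorter and more structural: once you feed the $n$-maximal growth into Proposition~\ref{prop:BRequal_d-reg} to cap the degrees of the minimal generators of $\rgin(\overline{I})$, the comparison $\rgin(\overline{I})_{\le\delta}=\rgin(I)_{\le\delta}$ immediately forces those generators to be minimal generators of $\rgin(I)$, hence free of $x_n$ by Bayer--Stillman. The only point to make explicit is that you are using the \emph{converse} direction of Remark~\ref{rem:satginnox_n}: the remark states only ``$I$ saturated $\Rightarrow$ no $x_n$ in the minimal generators of $\rgin(I)$'', but the formula $\rgin(I^{\sat})=\rgin(I)_{x_n\to0}$ there does give the other implication as well (if $\rgin(\overline{I})$ has no generator divisible by $x_n$ then $\rgin(\overline{I}^{\sat})=\rgin(\overline{I})$, hence $H_{\overline{I}^{\sat}}=H_{\overline{I}}$, hence $\overline{I}^{\sat}=\overline{I}$). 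With that one line added, your proof is complete.
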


\begin{proof}
From Lemma~\ref{lemma:BR_n-1row} it follows that $\M_{P/I}$ has
  $n$-maximal growth in degree~$\delta$.
  
Let $\overline{I} = \langle I_{\leq \delta} \rangle$
and $\widetilde{I} = \overline{I}^{\sat}$.
Notice that we have that $\overline{I}_d \subseteq
\widetilde{I}_d$ for all $d\in\NN$.
We want to prove that our hypotheses imply $\overline{I}_d =
\widetilde{I}_d$ for all $d\in\NN$.

By Remark~\ref{rem:sat} we have that
$\overline{I}_d = \widetilde{I}_d$ for all $d\gg 0$.

Let $f\in\widetilde{I}_d$ be an element with
$d\le \delta$.  Then $f\mathfrak{m}^\ell\subseteq \overline{I}$ for
some integer~$\ell$.  Since $\overline{I}\subseteq I$, we have
$f\mathfrak{m}^\ell\subseteq I$, and, by hypothesis, $I$ is saturated,
therefore $f\in I$.  Now, $I$ and $\overline{I}$ coincide in degree $\le
\delta$, hence $f\in \overline{I}$. This shows $\overline{I}_d =
\widetilde{I}_d$ for all $d\le \delta$.

By Lemma~\ref{lemma:no_gens_i}, $I$ has no minimal generators in
degree $\delta{+}1$, and hence $I_d = \overline{I}_{d} = \widetilde{I}_{d}$ also
for $d=\delta{+}1$.

By contradiction, let $d> \delta{+}1$ be the biggest integer such that  
$\overline{I}_d \subsetneq \widetilde{I}_d$.
This means that in degree $d{+}1$
$$\M_{P/\overline{I}}(n,d{+}1)=\M_{P/\widetilde{I}}(n,d{+}1)$$
and in degree $d$
$$\M_{P/\overline{I}}(n,d)>\M_{P/\widetilde{I}}(n,d)$$
$$\text{and~~} \M_{P/\overline{I}}(j,d)\ge\M_{P/\widetilde{I}}(j,d),
\text{ for }j=1,\dots,n{-}1.$$
By definition $\overline{I}$ is generated in degree $\delta<d$, hence, by
Theorem~\ref{thm:SM_Persistence}, we have that
$$\M_{P/\overline{I}}(n,d{+}1)
=\sum_{j=1}^n\M_{P/\overline{I}}(j,d).$$ 
Now, using the equalities and inequalities above, we get
$$\M_{P/\widetilde{I}}(n,d{+}1) = \M_{P/\overline{I}}(n,d{+}1)
=\sum_{j=1}^n\M_{P/\overline{I}}(j,d) >\sum_{j=1}^n\M_{P/\widetilde{I}}(j,d).$$
This is impossible by the inequalities in Theorem~\ref{thm:HI_SM}.(a).
\end{proof}

%
%

%

Extending Corollary~\ref{cor:ksubvar_BRequal} to the case of saturated ideals, 
we can generalize Theorem 3.6 of \cite{ahn2007some}
and Theorem~3.6 of \cite{bigatti1994geometric}.

\begin{corollary}\label{cor:sat-dim}
Let $I$ be a saturated ideal in $P=\KK[x_1 ,\dots, x_n]$. If there exists
$\delta$ such that $I_\delta{\ne}P_\delta$ and
$\M_{P/I}$ has $(n{-}1)$-maximal or $n$\nobreakdash-maximal growth in degree $\delta$, let
$i=\min\{j{>}1 \mid \M_{P/I}(j,\delta){\ne}0\}$, 
then $\ideal{ I_{\leq \delta} }$ is a saturated ideal of
dimension $n{-}i$, of degree  $\M_{P/I}(i,\delta)$ and it is
$\delta$-regular. 
Moreover, $\dim(P/I)\le n{-}i$.
\end{corollary}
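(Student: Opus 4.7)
The plan is to assemble the four main tools developed in this section, using the saturation hypothesis as the bridge that unifies the two alternative growth conditions. First I would invoke Lemma~\ref{lemma:BR_n-1row} to reduce the two cases to one: because $I$ is saturated, $(n{-}1)$-maximal growth and $n$-maximal growth in degree~$\delta$ are equivalent, so without loss of generality $\M_{P/I}$ has $n$-maximal growth in degree~$\delta$.

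Once in the $n$-maximal growth regime, the truncation $\ideal{I_{\le\delta}}$ is saturated by Lemma~\ref{lemma:saturato} (whose hypothesis is precisely $(n{-}1)$-maximal growth, which we have by Lemma~\ref{lemma:BR_n-1row}). Then Corollary~\ref{cor:ksubvar_BRequal} applies directly and yields both the dimension statement $\dim(P/\ideal{I_{\le\delta}})=n-i+1$ (i.e.\ $\ideal{I_{\le\delta}}$ defines a projective subscheme of dimension $n-i$) and the degree statement $\deg(P/\ideal{I_{\le\delta}})=\M_{P/I}(i,\delta)$, with the same index $i=\min\{j>1\mid\M_{P/I}(j,\delta)\ne0\}$. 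The $\delta$-regularity of $\ideal{I_{\le\delta}}$ is immediate from Proposition~\ref{prop:BRequal_d-reg}, which gives $\reg(\ideal{I_{\le\delta}})\le\delta$.

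For the final bound $\dim(P/I)\le n-i$, I would simply use the inclusion $\ideal{I_{\le\delta}}\subseteq I$, which passes to the quotient to give $V(I)\subseteq V(\ideal{I_{\le\delta}})$, and hence the dimension of $P/I$ is bounded by that of $P/\ideal{I_{\le\delta}}$ (read as projective dimension of the defining ideals, to match the convention used in the statement).

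The assembly is essentially routine once the pieces are identified; there is no real obstacle, and the only conceptual step is recognizing Lemma~\ref{lemma:BR_n-1row} as the mechanism that makes the two hypothesis options interchangeable, thereby unlocking Lemma~\ref{lemma:saturato}, Corollary~\ref{cor:ksubvar_BRequal}, and Proposition~\ref{prop:BRequal_d-reg} at once.
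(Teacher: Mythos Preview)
Your proposal is correct and follows essentially the same route as the paper's proof: invoke Lemma~\ref{lemma:BR_n-1row} to identify the two growth hypotheses, then apply Lemma~\ref{lemma:saturato}, Corollary~\ref{cor:ksubvar_BRequal}, and Proposition~\ref{prop:BRequal_d-reg} in turn. Your explicit justification of the final bound $\dim(P/I)\le n{-}i$ via the inclusion $\ideal{I_{\le\delta}}\subseteq I$ is a welcome detail that the paper leaves implicit.
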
  
\begin{proof} By Lemma \ref{lemma:BR_n-1row}, since $I$ is saturated, having $(n{-}1)$-maximal or $n$-maximal growth is equivalent. By Lemma~\ref{lemma:saturato}, $\langle I_{\leq \delta} \rangle$ is a saturated ideal. The conclusions then follows from Corollary~\ref{cor:ksubvar_BRequal} and Proposition~\ref{prop:BRequal_d-reg}.
\end{proof}

Now we can generalize
Corollary~5.2 of \cite{ahn2007some}
and Corollary~2.9 of \cite{bigatti1994geometric}.

\begin{corollary}\label{cor:n-1maxgrowthgcdsatur}
Let $I$ be a saturated ideal in $P=\KK[x_1 ,\dots, x_n]$.
If  $\M_{P/I}$ has
$(n{-}1)$-maximal growth in degree~$\delta$
and potential degree of the GCD = $k \geq 1$.  
Then  $\ideal{I_{\le\delta}}=\ideal{I_{\le{\delta{+}1}}}$ is saturated and it has a GCD of degree $k$.
\end{corollary}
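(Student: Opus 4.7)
The plan is to assemble this as a straightforward chain of applications of earlier results. The key observation is that each ingredient of the conclusion already has a precursor lemma in the paper; the task is just to unlock them in the right order using the saturation hypothesis.

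First I would use Lemma~\ref{lemma:BR_n-1row} to upgrade the $(n{-}1)$-maximal growth of $\M_{P/I}$ in degree $\delta$ to $n$-maximal growth in degree $\delta$; this is where the hypothesis that $I$ is saturated gets consumed. From $n$-maximal growth in degree $\delta$, Remark~\ref{rem:BReq_Nomingen} tells us that $I$ has no minimal generators of degree $\delta{+}1$, which immediately gives $\ideal{I_{\le\delta}}=\ideal{I_{\le\delta{+}1}}$.

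Next I would get saturation of $\ideal{I_{\le\delta}}$ for free by quoting Lemma~\ref{lemma:saturato}, whose hypothesis ($(n{-}1)$-maximal growth of the saturated ideal $I$ in degree $\delta$) is exactly what we have.

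Finally, for the GCD, note that the potential degree $k=\M_{P/I}(2,\delta)\ge 1$ in particular forces $I_\delta\ne\{0\}$, so Corollary~\ref{cor:GCD_2row} is applicable provided we verify $2$-maximal growth in degree $\delta$. That last point follows from the Sectional Persistence Theorem~\ref{thm:Sectional_Persistence}: $n$-maximal growth in degree $\delta$ propagates downward to $j$-maximal growth for all $j\le n$ in the same degree, in particular for $j=2$. Then Corollary~\ref{cor:GCD_2row} delivers that both $\ideal{I_{\le\delta}}$ and $\ideal{I_{\le\delta{+}1}}$ share a common GCD of degree $\M_{P/I}(2,\delta)=k$, finishing the proof.

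There is no real obstacle here; the statement is essentially a packaging of Corollary~\ref{cor:GCD_2row} together with the saturation machinery (Lemmas~\ref{lemma:BR_n-1row} and~\ref{lemma:saturato}) developed for this section. The only subtlety to watch is to confirm that $k\ge 1$ is used only to ensure $I_\delta\ne \{0\}$ (needed to invoke Corollary~\ref{cor:GCD_2row}) and to exclude the trivial GCD $=1$ case of Corollary~\ref{cor:GCD_2row}.
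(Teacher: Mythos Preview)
Your approach is correct and essentially identical to the paper's: the paper also invokes Lemma~\ref{lemma:BR_n-1row} to upgrade to $n$-maximal growth, then cites Lemma~\ref{lemma:saturato} for saturation and Corollary~\ref{cor:GCD_BRequal} for the GCD; you have merely unpacked the latter into its constituent steps (Theorem~\ref{thm:Sectional_Persistence} plus Corollary~\ref{cor:GCD_2row}) and made the equality $\ideal{I_{\le\delta}}=\ideal{I_{\le\delta+1}}$ explicit via Remark~\ref{rem:BReq_Nomingen}.

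One small correction: your claim that $k\ge 1$ forces $I_\delta\ne\{0\}$ is false --- if $I_\delta=\{0\}$ then $\M_{P/I}(2,\delta)=\delta+1\ge 1$. The hypothesis $I_\delta\ne\{0\}$ is instead built into the \emph{definition} of the $\M$-potential degree of the GCD, so it is already implicitly assumed in the statement and you may invoke Corollary~\ref{cor:GCD_2row} directly.
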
  
\begin{proof}
  From Lemma~\ref{lemma:BR_n-1row} it follows that $\M_{P/I}$ has
  $n$-maximal growth in degree~$\delta$.  Hence
  Corollary~\ref{cor:GCD_BRequal} and Lemma~\ref{lemma:saturato} apply.
\end{proof}

Similarly to Corollary~\ref{cor:sat-dim}, 
we might be tempted to extend
Proposition~\ref{prop:dim-deg-trunc} to the case of saturated ideals,
or, equivalently,
Corollary~\ref{cor:sat-dim} to the case of 2-maximal growth.
The example below shows that this is not possible.





\begin{example}\label{ex:truncatnosat} 
 As in \cite{ahn2007some}, under the assumption of 
Proposition~\ref{prop:dim-deg-trunc}, if $I$ is saturated
 we can not conclude that $\ideal{ I_{\leq \delta}}$ is saturated.
For this example, we consider a first set of $98$ points
on the conic $Q$ with equation $(z -3t)(z+3t)=0$ in $\mathbb{P}^3$ 
and a second set of $16$ points outside $Q$. In this way we obtain
a saturated homogeneous ideal $I$ in $P=\QQ[x,y,z,t]$ and 
$  \M_{P/I}$ is 
  $$
\begin{array}{rcccccccccccccccccccccc} 
  _0 & _1 & _2 & _3  & _4 & _5 & _6 & _7 & _8 & _9 & _{10} & _{11} & _{12} & _{13} & _{14} &  \dots\\
  1  & 1 &  1 &  1 &  1 &  0 &  0 &  0 &  0 &  0 &   0 &   0 &   0 &  0 &  0  &   \dots\\
  1 & 2 &  3 &  4 &  5 &  2 &  2 &  0 &  0 &  0 &   0 &   0 &   0 &   0 &   0 &    \dots\\
  1 & 3 &  6 & 10 & 15 & 17 & 13 & 13 & 11 &  9 &   7 &   5 &   3 &   1 &   0 &    \dots\\
  1 & 4 & 10 & 20 & 35 & 52 & 65 & 78 & 89 & 98 & 105 & 110 & 113 & 114 & 114 &  \dots\\
\end{array} 
$$
From $\M_{P/I}$ we can read that $\ideal{I_{\le5}}$ has a GCD
of degree~$2$ (Corollary~\ref{cor:GCD_2row}),
however $\M_{P/I}$ does not have $3$-maximal growth in degree~$5$ 
(so Corollary~\ref{cor:n-1maxgrowthgcdsatur} does not apply),
indeed a direct computation shows that $I_{\leq 5}$ is not saturated.
\end{example}

\begin{example} 
Consider the polynomial ring $P=\QQ[x,y,z,t,h]$ and the strongly stable ideal 
$$I=(x^5, x^4y, x^3y^2, x^2y^3, xy^4, x^4z, x^3yz, x^2y^2z, x^4t,
\;\;xy^3z^3).$$ 

The ideal $I$ is saturated and 
  $$
  \M_{P/I}=
\begin{array}{rcccccccccc} 
  _0 & _1 & _2 & _3  & _4 & _5 & _6 & _7 & _8 &  \dots\\
  1 & 1 &  1 &  1 &  1 &   0 &   0 &   0 &   0 & \dots\\
  1 & 2 &  3 &  4 &  5 &   1 &   1 &   1 &   1 & \dots\\
  1 & 3 &  6 & 10 & 15 &  13 &  14 &  14 &  15 & \dots\\
  1 & 4 & 10 & 20 & 35 &  47 &  61 &  75 &  90 & \dots\\
  1 & 5 & 15 & 35 & 70 & 117 & 178 & 253 & 343 & \dots
\end{array} 
$$
We can now apply Corollary~\ref{cor:n-1maxgrowthgcdsatur} and see that
$\ideal{I_{\le5}}$ and $\ideal{I_{\le6}}$ are saturated and have GCD
of degree $1$ (we can check that the GCD is $x$).  For this example the
result in \cite[Corollary 5.2]{ahn2007some}, for detecting a GCD, do
not apply.
\end{example}

\section{Sectional matrices, GIN, and resolutions}
\label{examples}

 In this section, we will present some examples in order to compare the sectional matrix with other algebraic invariants, 
 such as the Hilbert function $H$, the generic initial ideal and the minimal resolution.
 
 We start from two homogeneous ideals with same Hilbert function but different $\rgin$, sectional matrix and Betti numbers.
 
 \begin{example}\label{ex:8.1}
  Consider  $P=\QQ[x,y,z]$ and let $$I=(x^2, xy, xz, y^3, y^2z,yz^2, z^3) ~~~\text{ and }~~~ J=(x^2, xy, y^2, xz^2, yz^2, z^3)$$ be two ideals in $P$. 
  Both ideals are strongly stable and hence, they coincide with their own $\rgin$.
  These two ideals clearly have distinct $\rgin$, but they have the
  same Hilbert function (the last row in the sectional matrices). 
They have different sectional matrix  and different graded Betti numbers.  
  $$
  \M_{P/I}=
\begin{array}{rcccccccccc} 
  _0 & _1 & _2 & _3  & _4 &  \dots\\
  1 & 1 & 0 & 0 & 0 & \dots\\
  1 & 2 & 1 & 0 & 0 & \dots\\
  1 & 3 & 3 & 0 & 0 & \dots
\end{array} 
    ~~\M_{P/J}=
\begin{array}{rcccccccccc} 
  _0 & _1 & _2 & _3  & _4 &  \dots\\
  1 & 1 & 0 & 0 & 0 & \dots\\
  1 & 2 & 0 & 0 & 0 & \dots\\
  1 & 3 & 3 & 0 & 0 & \dots
\end{array} 
$$
The resolutions of $P/I$ and $P/J$ are respectively
$$0 \to P(-4)\oplus P(-5)^3 \to P(-3)^3\oplus P(-4)^7 \to P(-2)^3\oplus P(-3)^4 \to P\to P/I\to 0.$$
$$0 \to P(-5)^3 \to P(-3)^2\oplus P(-4)^6\to P(-2)^3\oplus P(-3)^3 \to P\to P/J\to0.$$                                 

%
%
 \end{example}
 
 In the following example, we show two ideals with the same sectional matrix and same Betti numbers, but different generic initial ideal.
 
 \begin{example}\label{ex:8.2}
  Consider the polynomial ring $P=\QQ[x,y,z]$ and the ideals 
  $$I=(x^5, x^4y, x^3y^2,x^2y^3, xy^4, x^4z, x^3yz, x^2y^2z, x^3z^2,
  \underline{x^2yz^2})$$ 
  $$J=(x^5, x^4y, x^3y^2,x^2y^3, xy^4, x^4z, x^3yz, x^2y^2z, x^3z^2,
  \underline{ xy^3z}).$$ 
  Both the ideals are strongly stable and hence, they coincide with their own $\rgin$.
  These two ideals clearly have distinct $\rgin$, but they have the same Hilbert function, the same sectional matrix and the same Betti numbers.
    $$
    \M_{P/I}=\M_{P/J}=
\begin{array}{rcccccccccc} 
  _0 & _1 & _2 & _3  & _4 & _5 & _6 &  \dots\\
  1 & 1 & 1 &  1 &  1 &  0 &  0 & \dots\\
  1 & 2 & 3 &  4 &  5 &  1 &  1 & \dots\\
  1 & 3 & 6 & 10 & 15 & 11 & 12 & \dots
\end{array} 
$$
The resolution of $P/I$ and $P/J$ is
$$0 \to P(-7)^5 \to P(-6)^{14} \to P(-5)^{10} \to P. 
 $$
%
 \end{example}

In the following example we show two ideals with the same
sectional matrix, but different generic initial ideal and different Betti numbers.
 \begin{example}\label{ex:8.3}
Consider the polynomial ring $P=\QQ[x,y,z]$ and the ideals 
  $$I=(x^5, x^4y, x^3y^2,x^2y^3, xy^4, x^4z, x^2y^2z, x^3z^2, \underline{x^2yz^2})$$
  $$J=(x^5, x^4y, x^3y^2,x^2y^3, xy^4, x^4z, x^2y^2z, x^3z^2, \underline{xy^3z}).$$ 
  We have that 
  $$\rgin(I)=(x^5, x^4y, x^3y^2, x^2y^3, xy^4, x^4z, 
      x^3yz, x^2y^2z, \underline{x^3z^2}, \underline{x^2yz^3}),$$
   $$\rgin(J)=(x^5, x^4y, x^3y^2, x^2y^3, xy^4, x^4z, x^3yz, 
      x^2y^2z, \underline{xy^3z}, \underline{x^3z^3}).$$
          $$
          \M_{P/I}=\M_{P/J}=
\begin{array}{rcccccccccc} 
  _0 & _1 & _2 & _3  & _4 & _5 & _6 & _7&  \dots\\
  1 & 1 & 1 &  1 &  1 &  0 &  0 &  0 & \dots\\
  1 & 2 & 3 &  4 &  5 &  1 &  1 &  1 & \dots\\
  1 & 3 & 6 & 10 & 15 & 12 & 12 & 13 & \dots
\end{array} 
$$
The resolution of $P/I$ is 
$$0 \to P(-7)^2\oplus P(-8) \to P(-6)^{11} \to P(-5)^9 \to P\to P/I\to 0.$$
The resolution of $P/J$ is
$$0 \to P(-7)^3\oplus P(-8) \to P(-6)^{11}\oplus P(-7) \to P(-5)^9 \to P\to P/J\to 0.$$
%
\end{example}
 
 In the following example we show two ideals with the same
 $\rgin$, therefore the same sectional matrix and  Hilbert function,
 but different Betti numbers. 
 
 \begin{example}\label{ex:8.4}
  Consider the polynomial ring $P=\QQ[x,y,z]$, and the ideals of $P$
  $$I=(x^4, y^4, z^4, xy^2z^3, x^3yz^2, x^2y^3z) ~~~\text{ and }~~~ J=\rgin(I).$$ 
  These two ideals clearly have the same $\rgin$, therefore the same sectional matrix.
  However, $J$ has more minimal generators than $I$, 
  so they have different resolutions.
          $$
          \M_{P/I}=\M_{P/J}=
\begin{array}{rcccccccccc} 
  _0 & _1 & _2 & _3  & _4 & _5 & _6 & _7&_8&  \dots\\
  1 & 1 & 1 &  1 &  0 &  0 &  0 &  0 & 0 & \dots\\
  1 & 2 & 3 &  4 &  2 &  0 & 0 & 0 & 0 & \dots\\
  1 & 3 & 6 & 10 & 12 & 12 & 7 & 0 & 0 & \dots
\end{array} 
$$
The resolution of $P/I$ is
$$0 \to P(-9)^7 \to P(-7)^3\oplus P(-8)^9 
                           \to P(-4)^3\oplus P(-6)^3 \to P\to P/I\to0. $$
The resolution of $P/J$ is
$$0 \to P(-8)^5\oplus P(-9)^7
  \to P(-5)^2\oplus P(-6)^2\oplus P(-7)^{10}\oplus P(-8)^{14}\to$$
  $$\to P(-4)^3\oplus P(-5)^2\oplus P(-6)^5\oplus P(-7)^7 \to P\to P/J\to 0. $$
  
%
 \end{example}

In summary: 
\begin{table}[htbp]
\centering
\begin{tabular}{|c|r|r|r|}
\hline
\multicolumn{1}{|c|}
{\textbf{Example}} &
{\textbf{$\rgin$}} &
{\textbf{Sec. Mat.}} &
{\textbf{Betti n.}} \\
\hline
\ref{ex:8.1} & $\ne$  & $\ne$ & $\ne$ \\
\hline
\ref{ex:8.2} & $\ne$ & $=$ & $=$  \\
\hline
\ref{ex:8.3} & $\ne$  & $=$ & $\ne$  \\
\hline
\ref{ex:8.4} & $=$ & $=$ & $\ne$  \\
\hline
\end{tabular}
\end{table}

%


\paragraph{\textbf{Acknowledgements}} The authors thank Professor
L. Robbiano for the valuable discussions and for suggesting us Example \ref{ex:robbiano}.

During the preparation of this paper the third author was supported by the MEXT grant for Tenure Tracking system.

\bibliography{BPT2016file}{}
\bibliographystyle{plain}

\end{document}